\newtheorem{theorem}{Theorem}
\newtheorem{lemma}[theorem]{Lemma}
\newtheorem{corollary}[theorem]{Corollary}
\newtheorem*{observation}{Observation}
\newtheorem{claim}{Claim}
\newtheorem{proposition}[theorem]{Proposition}
\newcommand{\cA}{\mathcal{A}}
\newcommand{\cB}{\mathcal{B}}
\newcommand{\cH}{\mathcal{H}}
\newcommand{\cR}{\mathcal{R}}
\newcommand{\sm}{\setminus}
\newcommand{\Ext}{\text{Ext}}
\renewcommand{\prec}{<_\sigma}
\renewcommand{\Pr}{\mathbb{P}}
\newcommand{\tmix}{t_\text{mix}}
\title{Frozen $(\Delta+1)$-colourings of bounded degree graphs}
\author{Marthe Bonamy\thanks{CNRS, LaBRI, Universit\'e de Bordeaux, France. Email:~\emph{marthe.bonamy@labri.fr}}
\and Nicolas Bousquet\thanks{CNRS, G-SCOP, Univ. Grenoble-Alpes, Grenoble, France. Email:~\emph{nicolas.bousquet@grenoble-inp.fr}} 
\and Guillem Perarnau\thanks{School of Mathematics, University of Birmingham. Ring Rd N, Birmingham, West Midlands B15 2TT. Email:~\emph{g.perarnau@bham.ac.uk}.}}
\date{\today}
\begin{document}

\maketitle

\begin{abstract}
Let $G$ be a graph of maximum degree $\Delta$ and $k$ be an integer.
The \emph{$k$-recolouring graph of $G$} is the graph whose vertices are $k$-colourings of $G$ and where two $k$-colourings are adjacent if they differ at exactly one vertex. It is well-known that the $k$-recolouring graph is connected for $k\geq \Delta+2$.  Feghali, Johnson and Paulusma [Journal of Graph Theory, 83(4):340--358] showed that the $(\Delta+1)$-recolouring graph is composed by a unique connected component of size at least $2$ and (possibly many) isolated vertices.

In this paper, we study the proportion of isolated vertices (also called \emph{frozen} colourings) in the $(\Delta+1)$-recolouring graph. 
Our main contribution is to show that, if $G$ is connected, the proportion of frozen colourings of $G$ is exponentially smaller than the total number of colourings. This motivates the study of the Glauber dynamics on $(\Delta+1)$-colourings. In contrast to the conjectured mixing time for $k\geq \Delta+2$ colours, we show that the mixing time of the Glauber dynamics for $(\Delta+1)$-colourings can be of quadratic order.  Finally, we prove some results about the existence of graphs with large girth and frozen colourings, and study frozen colourings in random regular graphs.
\end{abstract}

\section{Introduction}

Throughout the paper, all the graphs are labeled and simple (neither loops nor multiedges). We denote by $G=(V,E)$ a graph on the set of vertices $[n]=\{1,\dots,n\}$ and by $\Delta=\Delta(G)$ its maximum degree.  For standard definitions and notations on graphs, we refer the reader to~\cite{Diestel}. A \emph{(proper) $k$-colouring} of $G$ is a function $\alpha: V(G) \rightarrow [k]$ such that, for every edge $xy\in E$, we have $\alpha(x)\neq \alpha(y)$. Since in this paper we only consider proper colourings, we will omit the proper for brevity. The \emph{chromatic number} $\chi(G)$ of a graph $G$ is the smallest integer $k$ such that $G$ admits a $k$-colouring.

The recolouring framework consists in finding step-by-step transformations between two proper colourings such that all intermediate states are also proper. The \emph{$k$-recolouring graph of $G$}, denoted by $\mathcal{C}_k(G)$ and defined for any $k\geq \chi(G)$, is the graph whose vertices are $k$-colourings of $G$ and where two $k$-colourings are \emph{adjacent} if and only if they differ at exactly one vertex. 

Recolouring problems were introduced by the statistical physics community in the context of Glauber Dynamics to study random $k$-colourings. These problems have also attracted the interest of the theoretical computer science community, motivated by its connections to the existence of FPTAS for the number of colourings of a graph. In the last few years, the structural properties of recolouring graphs have also received quite a lot of attention. Other frameworks in which reconfiguration problems have also been studied are boolean satisfiability~\cite{BonsmaMNR14,Gopalan09}, independent sets~\cite{ItoKO14}, dominating sets~\cite{BonamyB14a,SuzukiMN14}, list edge-colouring~\cite{ItoD11}, Kempe chains~\cite{Feghali0P15a} and many others (see~\cite{Heuvel13} for a survey).

\paragraph*{Glauber dynamics.}

The study of recolouring graphs was initially motivated by problems arising from statistical physics. Glauber dynamics is a Markov chain over configurations of spin systems of graphs. This is a very general framework and ``proper $k$-colourings'' is a particular case known as the ``antiferromagnetic Potts Model at zero temperature''. 
For a graph $G=(V,E)$ with $V=[n]$ and $k\in \mathbb{N}$, \emph{Glauber dynamics for $k$-colourings} is a Markov chain $X_t$ with state space the set of $k$-colourings of $G$ and transitions defined as follows,
\begin{itemize}
\item[1)] choose $v\in [n]$ and $c\in [k]$ independently and uniformly at random;
\item[2)] for every $u\neq v$, set $X_{t+1}(u)=X_t(u)$;
\item[3)] if $c$ does not appear in the neighbours of $v$, then set $X_{t+1}(v)=c$; otherwise set $X_{t+1}(v)=X_t(v)$.
\end{itemize}
Glauber dynamics is aperiodic and reversible. If it is irreducible, then the unique stationary distribution is uniform over the set of $k$-colourings of $G$. Thus, the corresponding Markov Chain Monte Carlo algorithm provides an almost uniform sampler for colourings of $G$. Moreover, if Glauber dynamics mixes in polynomial time, it provides a Fully Polynomial Randomized Approximation Scheme (FPRAS) for the number of colourings of $G$. For a survey on this topic, we refer the interested reader to~\cite{martinelli1999lectures}.

It is easy to see that Glauber dynamics for $k$-colourings is irreducible provided that $k \geq \Delta+2$. For an aperiodic and irreducible Markov chain, we define its mixing time as the number of steps needed to reach a distribution close to stationary~(see Section~\ref{sec:prec} for a formal definition). A famous conjecture in the area is that Glauber dynamics has mixing time $O(n \log n)$, provided that $k \geq \Delta+2$.
Jerrum~\cite{jerrum1995very} and independently Salas and Sokal~\cite{salas1997absence}, showed that the Glauber Dynamics for $k$-colourings is rapidly mixing for every $k\geq 2\Delta$. Vigoda~\cite{vigoda2000improved} proved it for every $k\geq 11\Delta/6$ using a path coupling argument with a Markov chain on $k$-colourings based on Kempe recolourings. This has been recently improved to $k\geq (11/6-\epsilon)\Delta$, for some $\epsilon>0$~\cite{chen2018linear,delcourt2018rapid}. 

For $k=\Delta+1$, there exists graphs $G$ for which Glauber dynamics is not irreducible (e.g. complete graphs).  An obvious obstruction is the existence of \emph{frozen colourings}; that is, $(\Delta+1)$-colourings where every colour appears in the closed neighbourhood of every vertex. Frozen colourings correspond to fixed points in Glauber dynamics, so their existence makes the chain not irreducible. Feghali, Johnson and Paulusma~\cite{FeghaliJP14} recently showed that if $k = \Delta+1$ and $\Delta\geq 3$, frozen colourings are actually the only possible obstruction for the irreducibility of the chain. That is, the recolouring graph is composed of a unique component containing all non-frozen colourings and (possibly many) isolated frozen colourings.

This raises the following natural questions: (1) what is the size of the non-frozen component and, given it is non-empty, (2) does the Glauber dynamics rapidly mixes  there.

\paragraph*{Results}


Let $\Omega(G)$ be the set of $(\Delta+1)$-colourings of $G$ and let $\Omega^*(G)$ be the set of frozen $(\Delta+1)$-colourings of $G$. We equip $\Omega(G)$ and $\Omega^*(G)$ with the uniform distribution and understand them as probability spaces.

The main result of this paper is that frozen colourings are rare among proper colourings of connected graphs.
\begin{theorem}\label{thm:main}
Let $\Delta,n\in \mathbb{N}$, with $3\leq \Delta\leq n-2$ and let $G$ be a  connected graph on $n$ vertices and maximum degree $\Delta$. Let $\alpha$ be a colouring chosen uniformly at random from $\Omega(G)$.  Then
 \[ \Pr(\alpha \text{ is frozen})\leq (6/7)^{\frac{n}{\Delta+1}}\;. \]
\end{theorem}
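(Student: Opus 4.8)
The plan is to exploit the rigidity of frozen colourings through colour-swapping and a partition of $\Omega(G)$ into small families, within each of which the frozen fraction is exponentially small. First I would record the structural consequences of freezing: if $\alpha$ is frozen then every colour occurs in every closed neighbourhood, so $|N[v]|=\Delta+1$ for all $v$, i.e.\ $G$ is $\Delta$-regular and each $N[v]$ is rainbow. Consequently, for any two colours $i\neq j$, every vertex of colour $i$ has exactly one neighbour of colour $j$, so the bipartite graph between the colour classes $C_i$ and $C_j$ is a perfect matching; in particular all classes have the same size $t:=n/(\Delta+1)$, and if $(\Delta+1)\nmid n$ then $\Omega^*(G)=\emptyset$ and we are done. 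From now on assume $G$ is $\Delta$-regular and $(\Delta+1)\mid n$.

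The engine of the proof is a swap operation. Fix the colour pair $\{1,2\}$ and let $M$ be the perfect matching it induces. For a frozen $\alpha$ and any $S\subseteq M$, swapping the colours $1\leftrightarrow 2$ on both endpoints of every edge of $S$ yields again a proper colouring $\alpha_S$: one checks no monochromatic edge is created, using that each colour class is independent and each neighbourhood is rainbow. This produces $2^{t}$ distinct proper colourings from $\alpha$. I would then partition $\Omega(G)$ into \emph{families}: two colourings lie in the same family when they have the same set $U$ of vertices coloured in $\{1,2\}$ and agree on $V\setminus U$. Every frozen colouring lies in a family for which $G[U]$ is a perfect matching and $V\setminus U$ is coloured with $\{3,\dots,\Delta+1\}$, and each such family has exactly $2^{t}$ members, namely the proper $2$-colourings of $G[U]$.

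The crux is to count the frozen members of a single family $\mathcal F$. With the outside fixed, $\alpha_S$ is frozen if and only if every $N[z]$ with $z\in V\setminus U$ stays rainbow, which forces $S$ to assign the same status to the two matching edges meeting $z$. Encoding this as an auxiliary graph $\mathcal H$ on the vertex set $M$ — one edge for each $z\in V\setminus U$, joining the matching edges of the colour-$1$ and colour-$2$ neighbours of $z$ — the frozen members of $\mathcal F$ are exactly those $S$ that are constant on each connected component of $\mathcal H$, so their number is $2^{c(\mathcal H)}$, where $c(\mathcal H)$ is the number of components. Since distinct families are disjoint subsets of $\Omega(G)$, the mediant inequality gives
\[ \Pr(\alpha\text{ is frozen}) = \frac{|\Omega^*(G)|}{|\Omega(G)|} \;\le\; \max_{\mathcal F}\frac{2^{c(\mathcal H_{\mathcal F})}}{2^{t}} \;=\; \max_{\mathcal F} 2^{\,c(\mathcal H_{\mathcal F})-t}. \]

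It then remains to bound $c(\mathcal H)$ away from $t$, and this is where I expect the real difficulty. Each matching edge $e=\{x,y\}$ has its colour-$1$ endpoint adjacent to $\Delta-1\ge 2$ vertices of $V\setminus U$, so $e$ is never isolated in $\mathcal H$ unless $x$ and $y$ are \emph{twins} sharing all their remaining neighbours; barring such twins one already gets $c(\mathcal H)\le t/2$ and hence the bound $2^{-t/2}\le (6/7)^{t}$ with room to spare. The main obstacle is therefore to control the twin (near-clique) configurations, and it is precisely here that the hypotheses that $G$ is connected and $\Delta\le n-2$ — so that $G$ is not a disjoint union of copies of $K_{\Delta+1}$ — must enter, guaranteeing that enough matching edges are genuinely merged in $\mathcal H$. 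A careful but deliberately non-optimal accounting of these merges should yield the stated constant $6/7$.
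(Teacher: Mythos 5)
Your structural setup (regularity, rainbow closed neighbourhoods, perfect matchings between colour classes) matches the paper's Claim~\ref{cla:1}, and your swap operation is essentially the paper's module/Kempe observation in disguise. But the quantitative core of the argument --- bounding $c(\mathcal H)$ away from $t$ --- is not only left unproved, it cannot be pushed to the stated bound. First, with the colour pair $\{1,2\}$ fixed in advance your max-over-families inequality is already trivial on the paper's own example $J(\ell)$: that graph has frozen colourings in which classes $1$ and $2$ are two of the ``module'' classes $V_2,\dots,V_\Delta$, so every matched pair consists of twins, every vertex of $\mathcal H$ is isolated, $c(\mathcal H)=t$, and \emph{every} colouring of that family is frozen. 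Hence you are forced to choose the pair adaptively for each frozen colouring, and this destroys the disjointness of the families on which your mediant inequality rests (two frozen colourings lying in the same $\{1,2\}$-family may prefer different pairs, and families taken with respect to different pairs overlap).

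Second, and more fundamentally, even the best adaptive choice of pair is too weak. Consider the $k$-lift of $K_{\Delta+1}$ in which all $\binom{\Delta+1}{2}$ matchings are the identity (i.e.\ $k$ disjoint cliques) except that about $k-1$ transpositions, spread evenly over the matchings, are inserted to make the graph connected. A matched pair $v^a_i,v^b_{\pi_{ab}(i)}$ fails to be twins only when some transposition touches index $i$ in a matching involving colour $a$ or $b$, so for \emph{every} pair of colour classes all but $O(t/\Delta)$ matched pairs are twins; since the number of frozen colourings in a family is exactly $2^{c(\mathcal H)}$, the frozen fraction of the best family is at least $2^{-O(t/\Delta)}=2^{-O(n/\Delta^2)}$, exponentially weaker than $(6/7)^{n/(\Delta+1)}$. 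The paper's proof avoids this obstruction by not restricting attention to colourings reachable from a frozen one by two-colour swaps: it compares frugal colourings against \emph{all} proper colourings via a greedy-extension count (Lemma~\ref{lem:ext}), peels off the closed neighbourhood of one vertex of a fixed colour class at a time, and gains a factor $6/7$ at each of the $n/(\Delta+1)$ steps (Lemma~\ref{lem:main}); in the twin-heavy case it wins by showing that twins force many extra \emph{proper but non-frugal} extensions --- a surplus of non-frozen colourings that your swap classes, which stay inside the matching $M$, cannot see.
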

Note that the condition $\Delta\leq n-2$ is necessary since every $(\Delta + 1)$-colouring of a $(\Delta+ 1)$-clique is frozen. For similar reasons, we also require $G$ to be connected. The condition of being connected can be weakened, leading to a weaker upper bound.

In the light of the result in~\cite{FeghaliJP14} and provided that $n$ is large enough with respect to $\Delta$, Theorem~\ref{thm:main} implies that the size of the component of the recolouring graph containing non-frozen colourings is always exponentially larger than the number of frozen $(\Delta+1)$-colourings. Thus, Glauber dynamics with $k=\Delta+1$ colours is likely to start in the non-frozen component, and while not irreducible, the chain will converge to the uniform distribution on it, which is an almost uniform distribution on $\Omega(G)$ (i.e. the total variation distance between it and the uniform distribution is small). From Theorem~\ref{thm:main} we obtain the following:
\begin{corollary}
Let $G$ be a graph on $n$ vertices and maximum degree $\Delta=o(n)$.
Then Glauber dynamics for $(\Delta+1)$-colourings of $G$ starting at a uniformly random vertex, gives an almost uniform sampler for random $(\Delta+1)$-colourings. 
\end{corollary}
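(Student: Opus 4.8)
The plan is to combine the structural theorem of Feghali, Johnson and Paulusma~\cite{FeghaliJP14} with the quantitative bound of Theorem~\ref{thm:main}, after which the corollary reduces to a one-line total-variation computation. First I would describe the limiting behaviour of the chain. By~\cite{FeghaliJP14}, for $\Delta\geq 3$ the recolouring graph $\mathcal{C}_{\Delta+1}(G)$ consists of a single non-trivial component $\mathcal{N}$, whose vertex set is exactly $\Omega(G)\setminus\Omega^*(G)$, together with the frozen colourings as isolated vertices. Restricted to $\mathcal{N}$, Glauber dynamics is irreducible, since every edge of $\mathcal{N}$ corresponds to a single-vertex recolouring that the chain performs with positive probability and $\mathcal{N}$ is connected; it is also aperiodic, since proposing the current colour of the chosen vertex leaves the colouring unchanged, giving every state a self-loop. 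As the dynamics is reversible with respect to the uniform measure, its unique stationary distribution on $\mathcal{N}$ is $U^-$, the uniform distribution on $\Omega(G)\setminus\Omega^*(G)$, and any chain started inside $\mathcal{N}$ converges to $U^-$.

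The core of the argument is to show that $U^-$ is close to the intended target $U$, the uniform distribution on $\Omega(G)$. Writing $p=\Pr(\alpha\text{ is frozen})=|\Omega^*(G)|/|\Omega(G)|$, the two measures differ only through their normalisation on non-frozen colourings and the fact that $U^-$ assigns no mass to frozen ones, so a direct computation gives
\[
d_{TV}(U^-,U)=\tfrac12\sum_{x\in\Omega(G)}\bigl|U^-(x)-U(x)\bigr| = p .
\]
Theorem~\ref{thm:main} yields $p\leq(6/7)^{n/(\Delta+1)}$, and the hypothesis $\Delta=o(n)$ forces $n/(\Delta+1)\to\infty$, whence $d_{TV}(U^-,U)\to 0$. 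Thus a chain launched anywhere in $\mathcal{N}$ is an almost uniform sampler for $\Omega(G)$.

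It remains to account for the starting state. Starting from a uniformly random colouring, the chain lands in $\mathcal{N}$ with probability $1-p$ and, on the complementary event of probability $p\leq(6/7)^{n/(\Delta+1)}$, begins at a frozen colouring where it remains frozen. Conditioning on the non-frozen start and absorbing the exceptional event into an additive error of $p$, the previous paragraph shows the output lies within $o(1)$ of $U$ in total variation. Consistently with this, since the uniform measure is stationary even for the reducible chain, the mixture of the two cases reproduces $U$ exactly, so all the content sits in the closeness of $U^-$ to $U$.

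The only genuinely delicate point I anticipate is justifying that the restriction of the dynamics to $\mathcal{N}$ is irreducible and aperiodic, so that convergence to $U^-$ is legitimate; this is precisely where~\cite{FeghaliJP14} is invoked and is the reason the hypothesis $\Delta\geq 3$ enters. The boundary cases $\Delta\in\{1,2\}$ fall outside that theorem, but there $G$ is a disjoint union of edges, paths and cycles, for which the claim can be checked directly. Everything else is either the already-granted Theorem~\ref{thm:main} or the elementary identity $d_{TV}(U^-,U)=p$ above.
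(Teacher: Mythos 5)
Your argument is correct and is essentially the paper's own (the paper disposes of the corollary in two sentences: the probability of starting frozen is $o(1)$ and the frozen colourings contribute $o(1)$ to the stationary mass); you have simply filled in the details, in particular the identity $d_{TV}(U^-,U)=|\Omega^*(G)|/|\Omega(G)|$ and the appeal to~\cite{FeghaliJP14} for ergodicity on the non-frozen component. The only caveat, which the paper shares, is that Theorem~\ref{thm:main} is stated for connected $G$, a hypothesis the corollary omits.
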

The corollary follows, as for $\Delta=o(n)$, the probability to start in a frozen colouring is $o(1)$ and the contribution of frozen colourings to the stationary distribution is $o(1)$. The condition $\Delta=o(n)$ is probably not necessary.

The previous results motivate the question of whether the $O(n \log n)$ mixing time conjecture can be extended to Glauber dynamics for $(\Delta + 1)$-colourings that are non-frozen. Next result answers it in the negative for constant $\Delta$.
 \begin{proposition}\label{prop:tmix}
Let $\Delta,k\in \mathbb{N}$ with $\Delta\geq 3$ and $k\geq 5$. Then, there exists a $\Delta$-regular graph $G$ on $n=2k(\Delta+1)$ vertices that satisfies the following. Let $\cH$ denote the subgraph of the $(\Delta+1)$-recolouring graph induced by $\Omega(G)\setminus \Omega^*(G)$. Then, the Glauber dynamics restricted to $V(\cH)$ is ergodic and has mixing time at least
$n^2/(8(\Delta+1))$.
\end{proposition}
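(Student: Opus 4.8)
The plan is to construct an explicit $\Delta$-regular graph whose non-frozen recolouring component has a long, essentially one-dimensional structure, so that a diffusion-type lower bound on mixing time applies. The natural candidate is to take $2k$ disjoint copies of a $(\Delta+1)$-clique-like gadget arranged in a cycle, where each gadget admits a unique proper $(\Delta+1)$-colouring up to some rigid choice, and where the only way to recolour is to propagate a ``defect'' around the cycle. Since $K_{\Delta+1}$ is $\Delta$-regular and its colourings are frozen, I would instead use a near-clique gadget: delete one edge from $K_{\Delta+1}$ (or from a slightly larger block) so that each gadget has exactly one recolourable vertex, and wire consecutive gadgets together along that flexibility so that the set of non-frozen colourings is in bijection with positions along a path or cycle of length proportional to $n$. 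The condition $n=2k(\Delta+1)$ strongly suggests $2k$ blocks each on $\Delta+1$ vertices, and $k\ge 5$ ensures the cycle is long enough that $\cH$ is connected and nontrivial.

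The key steps, in order, are as follows. First I would describe the gadget and the global graph $G$ precisely and verify it is $\Delta$-regular on $n=2k(\Delta+1)$ vertices. Second, I would characterise $\Omega(G)\setminus\Omega^*(G)$, showing that the non-frozen colourings are exactly those in which a single ``unfrozen spot'' sits at one location, and that a single Glauber move can only shift this spot to an adjacent location; this establishes that $\cH$ is (isomorphic to, or closely modelled by) a cycle or path of length $\Theta(n)$. Third, using that $\cH$ is connected and the chain restricted to $V(\cH)$ is aperiodic, I would conclude ergodicity. Fourth, to get the quadratic mixing-time lower bound, I would apply a standard diameter or conductance/bottleneck argument: since the state space has graph-diameter $\Theta(n)$ and each Glauber step moves the configuration by at most a bounded amount along this linear structure, the chain behaves like a random walk on a path of length $\Theta(n)$, whose mixing time is $\Theta(n^2)$. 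The explicit constant $n^2/(8(\Delta+1))$ would come from tracking that each step succeeds with probability $\Theta(1/(\Delta+1))$ in attempting to move the spot, giving a lazy random walk on a path whose relaxation/mixing time scales as the square of the path length.

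The main obstacle, I expect, is the second step: proving that the non-frozen colourings really do form a clean linear (cycle-like) structure and that no ``shortcut'' Glauber moves exist that would let the chain jump between distant configurations and thereby speed up mixing. One must rule out that a single vertex recolouring can change the global state in a way that contracts distances in $\cH$. This requires a careful local analysis of the gadget: one needs each block to be rigid enough that at most one vertex is ever recolourable in a given colouring, and that recolouring it forces the flexibility to migrate to precisely the neighbouring block. Choosing the gadget and the inter-block wiring so that this rigidity holds for all $k\ge 5$ and all $\Delta\ge 3$ simultaneously is the delicate design problem. Once the linear structure is nailed down, the mixing lower bound is routine: I would invoke the standard fact that a reversible chain on a state space of diameter $D$ with bounded single-step displacement has mixing time $\Omega(D^2)$, specialise $D=\Theta(k\Delta)=\Theta(n)$, and optimise constants to reach the stated bound $n^2/(8(\Delta+1))$.
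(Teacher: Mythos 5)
Your construction is essentially the right one --- it matches the paper's graph $J(2k)$, namely $2k$ copies of $K_{\Delta+1}$ minus an edge, chained cyclically through the endpoints of the missing edges --- but the two pillars of your mixing-time argument fail. First, the non-frozen component $\cH$ is not a path or cycle of length $\Theta(n)$, nor is it closely modelled by one: for this graph the frozen colourings alone number at least $((\Delta-1)!)^{2k}$ (permute colours inside the module of each block, as in Lemma~\ref{lem:module}), and by Theorem~\ref{thm:main} the non-frozen ones outnumber them by a factor exponential in $n$; moreover in a typical colouring many blocks are simultaneously recolourable, so there is no single well-defined ``unfrozen spot'' whose position parametrises the state. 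Second, the ``standard fact'' you invoke --- that a reversible chain with bounded single-step displacement on a state space of diameter $D$ mixes in time of order at least $D^2$ --- is false in general (the lazy walk on the hypercube has diameter $n$ and mixing time $\Theta(n\log n)$); the correct refinements carry a $\log(1/\pi_{\min})$ in the denominator, and here $\log|V(\cH)|=\Theta(n)$ while the diameter of $\cH$ is only known to lie between linear and $O(n^2)$ (the paper explicitly notes no superlinear lower bound on this diameter is known for $\Delta\ge 3$), so no diameter-based bound can deliver $n^2/(8(\Delta+1))$. You also misplace the source of the $(\Delta+1)$ in the constant: a Glauber step moves a designated vertex with probability $1/n$, not $\Theta(1/(\Delta+1))$; the $\Delta+1$ enters only through $k=n/(2(\Delta+1))$.

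The missing idea is that the quadratic cost comes not from the diameter of $\cH$ but from the uniform vertex-selection rule combined with a forced \emph{order} of updates. The paper starts the chain at an explicit colouring in which $v^1_2,\dots,v^1_{2k}$ all receive colour $1$, and shows that for colour $1$ to leave the middle vertex $v^1_{k+1}$ --- an event whose complement has stationary probability at most $1/(\Delta+1)\le 1/4$ --- the ``defect'' must propagate inward one block at a time from the two ends; at each of the $k$ stages only two specific vertices can make progress, each chosen with probability $1/n$ per step, so each stage duration stochastically dominates a geometric variable with parameter $2/n$. Concentration for the sum of $k$ independent geometrics then shows that after $kn/4=n^2/(8(\Delta+1))$ steps the chain still sits, with probability more than $1/2$, in a set of stationary measure at most $1/4$, which forces $\tmix\ge n^2/(8(\Delta+1))$. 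Note also that ergodicity does not follow from the cycle being ``long enough''; it is the theorem of Feghali, Johnson and Paulusma~\cite{FeghaliJP14} that the non-frozen colourings form a single component of the recolouring graph when $\Delta\ge 3$. If you replace your diameter argument with this distinguishing-statistic, sequential-bottleneck argument, your construction goes through.
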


The previous result assumes that $\Delta\geq 3$. The case $\Delta=2$ was studied by Dyer, Goldberg and Jerrum~\cite{DyerGJ06} who proved that Glauber dynamics with $k=3$ colours on a path of order $n$ has mixing time $\Theta(n^3\log{n})$. One can check that the $3$-recolouring graph of a path is connected and has diameter $\Theta(n^2)$.  
In~\cite{FeghaliJP14}, the authors showed that the non-frozen component of the $(\Delta+1)$-recolouring graph has diameter $O(n^2)$, but, to our knowledge, no graph with superlinear diameter is known for $\Delta\geq 3$.
%

We believe that the dependence on $\Delta$ in the exponent in Theorem~\ref{thm:main} is not best possible. However, there exist graphs for which the probability a colouring is frozen is only a factor $\log{\Delta}$ in the exponent off from the upper bound in Theorem~\ref{thm:main}.  

 
%
%

%
%
%

\begin{proposition}\label{prop:lb}
For every $n_0$ and every $\Delta\geq 3$ there exist $n\geq n_0$ and a connected $\Delta$-regular graph $G$ on $n$ vertices such that if $\alpha$ is chosen uniformly at random from $\Omega(G)$, then
\[ \Pr(\alpha \text{ is frozen}) \geq e^{- \frac{3\log(\Delta)}{\Delta}\cdot n}\;. \]
\end{proposition}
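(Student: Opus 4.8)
The plan is to build an explicit $\Delta$-regular graph out of many near-complete gadgets in which frozen colourings are forced into a rigid shape that can be counted exactly, and then to compare that count with a crude product upper bound on $|\Omega(G)|$. Concretely, I would take $m$ disjoint copies $C_1,\dots,C_m$ of $K_{\Delta+1}$ with one edge deleted, calling the two non-adjacent endpoints of the deleted edge in $C_i$ the \emph{deficient} vertices $a_i,b_i$ (each of internal degree $\Delta-1$), and arrange the copies in a cycle by adding the edges $\{b_i,a_{i+1}\}$ for $i\in\mathbb{Z}/m\mathbb{Z}$. The resulting graph $G$ is connected and $\Delta$-regular on $n=m(\Delta+1)$ vertices, and choosing $m\ge n_0/(\Delta+1)$ gives $n\ge n_0$; moreover the subgraph induced on each $C_i$ is exactly $K_{\Delta+1}-e$.

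The main step is to characterise the frozen colourings of $G$. Since $G$ is $\Delta$-regular, a colouring $\alpha$ is frozen if and only if the closed neighbourhood $N[v]$ of every vertex receives all $\Delta+1$ colours. Because $\Delta\ge 3$, each $C_i$ has an internal vertex $w$ (neither $a_i$ nor $b_i$), and $N[w]=C_i$; the frozen condition at $w$ therefore forces $\alpha$ restricted to $C_i$ to be a bijection onto $[\Delta+1]$, so every clique is rainbow and $\alpha(a_i)\neq\alpha(b_i)$. Next, writing $N[a_i]=(C_i\setminus\{b_i\})\cup\{b_{i-1}\}$ and using that $C_i$ is rainbow (so $C_i\setminus\{b_i\}$ misses exactly the colour $\alpha(b_i)$), the frozen condition at $a_i$ forces $\alpha(b_{i-1})=\alpha(b_i)$; symmetrically the condition at $b_i$ forces $\alpha(a_{i+1})=\alpha(a_i)$. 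Propagating these equalities around the cycle collapses all the freedom: all $a_i$ share one colour $c_a$ and all $b_i$ share one colour $c_b$, with $c_a\neq c_b$ (forced both inside the cliques and across the joining edges). Conversely, any colouring of this shape is frozen, so a frozen colouring is determined by the ordered pair $(c_a,c_b)$ together with one independent rainbow extension on each clique, giving exactly
\[
|\Omega^*(G)| \;=\; (\Delta+1)\,\Delta\,\big((\Delta-1)!\big)^{m}\,.
\]

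For the denominator I would use that the restriction map $\Omega(G)\to\prod_i\Omega(C_i)$ is injective, so $|\Omega(G)|\le\prod_i|\Omega(C_i)|$, and a short chromatic-polynomial computation gives $|\Omega(K_{\Delta+1}-e)|=2(\Delta+1)!$, whence $|\Omega(G)|\le (2(\Delta+1)!)^m$. Dividing and using $(\Delta+1)!=(\Delta+1)\Delta(\Delta-1)!$,
\[
\frac{|\Omega^*(G)|}{|\Omega(G)|}\;\ge\;\frac{(\Delta+1)\Delta}{\big(2(\Delta+1)\Delta\big)^{m}}\;\ge\;\big(2(\Delta+1)\Delta\big)^{-m}\,.
\]
It then suffices to verify $2(\Delta+1)\Delta\le \Delta^{3(\Delta+1)/\Delta}$, since raising to the $-m$ and recalling $n=m(\Delta+1)$ turns the right-hand bound into $\Delta^{-3n/\Delta}=e^{-\frac{3\log(\Delta)}{\Delta}\cdot n}$. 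This inequality is elementary: the exponent satisfies $3(\Delta+1)/\Delta\ge 3$, so $\Delta^{3(\Delta+1)/\Delta}\ge\Delta^3$, and $\Delta^3\ge 2(\Delta+1)\Delta$ is equivalent to $\Delta^2\ge 2\Delta+2$, which holds for all $\Delta\ge 3$.

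The hard part is entirely the frozen-colouring characterisation in the second paragraph: one must check that the purely local frozen constraints propagate globally around the cycle and annihilate every degree of freedom except the single ordered colour pair $(c_a,c_b)$ and the per-clique rainbow bijections, which is what makes the exact count of $|\Omega^*(G)|$ possible. The product upper bound and the final numerical inequality are routine by comparison.
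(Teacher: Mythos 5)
Your construction is exactly the paper's graph $J(\ell)$ (your $a_i,b_i$ are its $v_i^1,v_i^{\Delta+1}$), and your proof follows the same route: the same upper bound $|\Omega(G)|\le (2(\Delta+1)!)^m$ via per-clique restriction, the same lower bound $((\Delta-1)!)^m$ on frozen colourings, and the same final comparison $2(\Delta+1)\Delta\le\Delta^3$. The only difference is that you classify $\Omega^*(G)$ exactly by propagating the rainbow/rigidity constraints around the cycle, whereas the paper only exhibits $((\Delta-1)!)^m$ frozen colourings by permuting colours inside the modules $\{v_i^2,\dots,v_i^\Delta\}$ (its Lemma~\ref{lem:module}); both are correct and the extra precision is not needed for the bound.
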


The graphs in Proposition~\ref{prop:lb} contain many cliques of size $\Delta$. Our next result shows the existence of locally sparse $\Delta$-regular graphs with frozen colourings.
\begin{proposition}\label{prop:girth}
For every $\Delta,g\geq 3$ there exists a $\Delta$-regular graph $G$  with girth at least $g$ that has frozen colourings.
\end{proposition}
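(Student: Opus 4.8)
My plan is to isolate a single minimal graph that already carries a frozen colouring and then inflate its girth while preserving that colouring, exploiting the fact that being frozen is a strictly local condition. \textbf{Local reformulation.} Since the graph $G$ we seek is $\Delta$-regular, each closed neighbourhood $N[v]=\{v\}\cup N(v)$ has exactly $\Delta+1$ vertices, matching the $\Delta+1$ available colours. Consequently a $(\Delta+1)$-colouring is frozen if and only if every closed neighbourhood is \emph{rainbow}, i.e.\ the restriction of $\alpha$ to $N[v]$ is a bijection onto $[\Delta+1]$ for every $v$. The key point is that this property is invariant under covering maps: if $\pi\colon \tilde G\to H$ is a covering map between $\Delta$-regular graphs, then $\pi$ sends $N[\tilde v]$ bijectively onto $N[\pi(\tilde v)]$, so whenever $\alpha$ is frozen on $H$ the pulled-back colouring $\alpha\circ\pi$ is proper and rainbow on every closed neighbourhood of $\tilde G$, hence frozen.

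\textbf{Base graph and reduction.} I take $H=K_{\Delta+1}$: it is $\Delta$-regular, it contains cycles, and colouring its vertices with distinct colours is frozen, because the unique closed neighbourhood is all of $K_{\Delta+1}$. By the previous paragraph, any finite covering graph $\tilde G$ of $K_{\Delta+1}$ is again $\Delta$-regular and inherits a frozen colouring. The proposition therefore reduces to the purely topological statement that, for every $g$, the graph $K_{\Delta+1}$ admits a finite cover of girth at least $g$.

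\textbf{Producing a high-girth cover.} To build such a cover I would use the standard correspondence between covers of $K_{\Delta+1}$ and finite-index subgroups of its fundamental group $\pi_1(K_{\Delta+1})$, which is free of rank $\binom{\Delta+1}{2}-\Delta\ge 1$. A cycle of length $\ell$ in a cover projects to a closed non-backtracking walk of length $\ell$ in $K_{\Delta+1}$ that, since the cover is determined by a \emph{normal} subgroup $\ker\phi$, represents an element of $\ker\phi$; being a simple cycle, this walk is cyclically reduced and hence a nontrivial element of the free group. There are only finitely many cyclically reduced closed walks of length less than $g$, giving a finite set of nontrivial group elements. Because free groups are residually finite, there is a homomorphism $\phi$ onto a finite group $Q$ whose kernel avoids all of them; the cover associated with $\ker\phi$ is then finite, $\Delta$-regular, carries the lifted frozen colouring, and has girth at least $g$.

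\textbf{Main obstacle.} The crux is exactly the construction of the high-girth cover, and the subtlety I would emphasise is that one \emph{cannot} use the naive abelian lift. In an abelian (e.g.\ cyclic $\mathbb{Z}_N$-voltage) cover, a closed walk lifts to a cycle whenever its \emph{abelianised} net voltage vanishes, so commutator-type, null-homologous short walks survive; concretely, any abelian Cayley construction already contains $4$-cycles (the ``parallelogram'' from $s_1+s_2=s_2+s_1$), capping the girth at $4$. It is precisely the non-abelian structure of the free group, together with its residual finiteness, that lets one separate \emph{all} short nontrivial walks simultaneously and thereby kill every short cycle without deleting edges, which would otherwise destroy $\Delta$-regularity. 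Establishing this separation cleanly, or replacing it by an explicit non-abelian permutation lift, is the only genuine work; everything else is the local invariance of the first step.
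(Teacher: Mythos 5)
Your proof is correct, and its first half coincides exactly with the paper's: the paper's Claim~2 is precisely your observation that frozen colourings of $\Delta$-regular graphs correspond to lift (covering) structures over $K_{\Delta+1}$, so that every finite cover of $K_{\Delta+1}$ is $\Delta$-regular and carries a pulled-back frozen colouring. Where you genuinely diverge is in producing the high-girth cover. The paper takes a uniformly random $k$-lift of $K_{\Delta+1}$ and invokes a Poisson approximation for the joint distribution of short cycle counts (Fortin--Rudinsky) to conclude that with positive probability the lift has no cycle of length below $g$; this is short but leans on an external probabilistic result. You instead argue deterministically: the finitely many nontrivial conjugacy classes of $\pi_1(K_{\Delta+1})$ represented by non-backtracking closed walks of length less than $g$ can be separated from the identity by a single finite quotient, by residual finiteness of free groups, and the regular cover associated with the kernel has girth at least $g$. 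Both arguments are sound; yours trades the probabilistic black box for a classical group-theoretic one and yields an explicit (regular, connected) cover, at the cost of a potentially much larger graph than the random lift typically needs. One small caveat: your side remark that \emph{every} abelian lift of $K_{\Delta+1}$ has girth at most $4$ is stated more strongly than you justify (the parallelogram relation you cite is a Cayley-graph phenomenon, and a $4$-cycle in a voltage lift requires a non-backtracking closed base walk of length $4$ with vanishing net voltage, which is not automatic); but this remark is purely motivational and carries no weight in the proof.
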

 



To prove this proposition we use a randomised construction based on random lifts of the $(\Delta + 1)$-clique.  Let $G_{n,\Delta}$ be a graph chosen uniformly at random among all $\Delta$-regular graphs on $n$ vertices. We conclude the paper by showing that typically, $G_{n,\Delta}$ does not admit frozen colourings.
\begin{proposition}\label{prop:whp}
For every $\Delta\geq 3$ there exist $c,n_0>0$ such that for every $n\geq n_0$, we have
$$
\Pr(G_{n,\Delta} \text{ has a frozen colouring}) \leq e^{-c n}\;.
$$

\end{proposition}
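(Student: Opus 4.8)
The plan is to use the first moment method, exploiting the rigid structure that a frozen colouring forces on a regular graph, and then transfer the estimate from the configuration model to the uniform model. The first step is structural: if $\alpha$ is a frozen colouring of a $\Delta$-regular graph $G$, then for every vertex $v$ the closed neighbourhood $N[v]$ has exactly $\Delta+1$ vertices and must contain all $\Delta+1$ colours, so $N[v]$ is rainbow and $v$ has exactly one neighbour in each colour class other than its own. Hence for any two colours $i\neq j$ the bipartite graph between the classes $V_i$ and $V_j$ is $1$-regular, i.e. a perfect matching. In particular all classes have the same size $m:=n/(\Delta+1)$ (so $\Pr=0$ unless $(\Delta+1)\mid n$, and we may assume divisibility), and $G$ is exactly the graph obtained from $K_{\Delta+1}$ by blowing up each vertex into an independent set of size $m$ and each edge into a perfect matching.

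Next I would bound $\Pr(G_{n,\Delta}\text{ has a frozen colouring})\le \mathbb{E}[Z]$, where $Z$ is the number of frozen colourings, and compute $\mathbb{E}[Z]$ in the configuration model $\hat G$, transferring at the end through $\Pr(G_{n,\Delta}\in A)\le \Pr(\hat G\in A)/\Pr(\hat G\text{ simple})$ together with the fact that $\Pr(\hat G\text{ simple})\to e^{(1-\Delta^2)/4}$ is a positive constant for fixed $\Delta$. There are $\tfrac{n!}{(m!)^{\Delta+1}}$ balanced colourings, and by the structural description a fixed balanced $\alpha$ is realised as a frozen colouring precisely by those pairings of the $\Delta n$ half-edges that (i) assign the $\Delta$ half-edges of each vertex bijectively to the $\Delta$ other classes, and (ii) match, for each of the $\binom{\Delta+1}{2}$ pairs of classes, the $m$ corresponding half-edges on each side; such a pairing automatically yields a simple graph. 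This gives $(\Delta!)^n\,(m!)^{\binom{\Delta+1}{2}}$ good pairings out of $(\Delta n-1)!!$, whence
\[
\mathbb{E}[Z]=\frac{n!}{(m!)^{\Delta+1}}\cdot\frac{(\Delta!)^n\,(m!)^{\binom{\Delta+1}{2}}}{(\Delta n-1)!!}\,.
\]

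Finally I would apply Stirling's formula to show $\tfrac1n\log\mathbb{E}[Z]\to f(\Delta)$, first checking that the $n\log n$ terms cancel (as they must for the answer to be of order $e^{\Theta(n)}$), and that the explicit rate is
\[
f(\Delta)=\log(\Delta!)-\tfrac{\Delta}{2}\log\Delta-\tfrac{\Delta-2}{2}\log(\Delta+1)\,.
\]
The only real obstacle is the elementary but essential inequality $f(\Delta)<0$ for all $\Delta\ge 3$: using $\log(\Delta!)\le \Delta\log\Delta-\Delta+\tfrac12\log(2\pi\Delta)+\tfrac{1}{12\Delta}$ and $\log(\Delta+1)\ge\log\Delta$ reduces it to $\tfrac32\log\Delta-\Delta+\tfrac12\log(2\pi)+\tfrac1{12\Delta}<0$, which holds for every $\Delta\ge 3$ (indeed $f(\Delta)\sim-\Delta\to-\infty$). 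With $f(\Delta)<0$ fixed, $\mathbb{E}[Z]\le e^{f(\Delta)n/2}$ for all large $n$, and dividing by the constant $\Pr(\hat G\text{ simple})$ yields $\Pr(G_{n,\Delta}\text{ has a frozen colouring})\le e^{-cn}$ for a suitable $c=c(\Delta)>0$, while the non-divisible case is trivial. The counting and the configuration-model transfer are routine; the sign of $f(\Delta)$ is where the real content lies.
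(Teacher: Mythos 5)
Your proposal is correct and takes essentially the same route as the paper: both are first-moment counts of (graph, frozen colouring) pairs, driven by the same structural fact that a frozen colouring forces $G$ to be an $(n/(\Delta+1))$-lift of $K_{\Delta+1}$, and both reduce to checking that an explicit function of $\Delta$ has the right sign for $\Delta\geq 3$ (your rate $\log(\Delta!)-\tfrac{\Delta}{2}\log\Delta-\tfrac{\Delta-2}{2}\log(\Delta+1)$ agrees with the paper's $-f(\Delta)/2$ up to the Stirling error absorbed in the $n^{O(1)}$ factor). The only cosmetic difference is that you count good pairings in the configuration model and divide by $\Pr(\hat G\text{ simple})=\Omega(1)$, whereas the paper counts the $k$-lifts of $K_{\Delta+1}$ via $\binom{n}{k,\ldots,k}(k!)^{\binom{\Delta+1}{2}}$ and divides by the Bender--Canfield asymptotic for the number of $\Delta$-regular graphs.
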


\section{Preliminaries}\label{sec:prec}

In this section we introduce some notations, definitions and observations we will use throughout the article.

Let $S_n$ the set of permutations of length $n$. We identify the set of linear orders on $[n]$ with $S_n$, and write $i\prec j$ for $\sigma(i)<\sigma(j)$. 

For every $v\in V(G)$, we define the \emph{neighbourhood} of $v$ as $N_G(v)=\{u\in V(G):\, uv\in E(G)\}$ and denote $d_G(v)=|N_G(v)|$. We also define the \emph{closed neighbourhood} of $v$ as $N_G[v]=N_G(v)\cup\{v\}$ and its \emph{second neighbourhood} as $N^2_G(v)=\{w\in V(G)\setminus N_G(v):\exists u\in N_G(v),\, uv,uw\in E(G)\}$. If the graph $G$ is clear from the context, we write $N(v)$, $d(v)$, $N[v]$ and $N^2(v)$.

\subsection{Markov chains and mixing time}
Let $\Omega$ be a finite set. For any two probability distributions $\mu$ and $\nu$ on $\Omega$, we define its \emph{total variation distance} as
$$
\|\mu-\nu\|_{TV}=\max_{A\subseteq \Omega}|\mu(A)-\nu(A)|\;.
$$

Consider an irreducible and aperiodic Markov chain on $\Omega$ with transition matrix $P$ and stationary distribution $\pi$. Define
$$
d(t)=\max_{x\in\Omega} \|P^t(x,\cdot)-\pi\|_{TV}\;,
$$
and
$$
\tmix(\epsilon)= \min \{t:\, d(t)\leq \epsilon\}\;.
$$
The \emph{mixing time} of the chain is defined as $\tmix:=\tmix(1/4)$.

\subsection{Basic properties of frozen colourings}


We now provide a number of straightforward structural properties of graphs that admit frozen colourings.

\begin{claim}\label{cla:1}
Let $G$ be a graph on $n$ vertices, maximum degree $\Delta$ and $\Omega^*(G)\neq \emptyset$. Let $\alpha\in \Omega^*(G)$. Then, the following hold:
\begin{itemize}
\item[(a)] $G$ is $\Delta$-regular;
\item[(b)] each colour class of $\alpha$ has the same size, in particular, $\Delta+1$ divides $n$;
\item[(c)] each pair of colours in $\alpha$ induces a perfect matching of $G$. 
\end{itemize}
\end{claim}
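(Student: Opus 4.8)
The plan is to unpack the definition of a frozen colouring and to observe that it forces every closed neighbourhood to be \emph{rainbow}, from which all three statements follow with almost no further work. Recall that $\alpha\in\Omega^*(G)$ means that for every $v\in V(G)$, all $\Delta+1$ colours appear in $N[v]$. Since $v$ contributes only the single colour $\alpha(v)$, the remaining $\Delta$ colours must all be realised among the $d(v)$ neighbours of $v$. Although the claim lists the three parts as (a), (b), (c), the natural logical order is (a), then (c), then (b), so I would present it that way.

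First I would prove (a). Because $N(v)$ must contain at least $\Delta$ distinct colours, we get $d(v)\geq\Delta$; combined with $d(v)\leq\Delta$ this yields $d(v)=\Delta$ for every $v$, so $G$ is $\Delta$-regular. Moreover, equality forces the $\Delta$ neighbours of $v$ to receive $\Delta$ pairwise distinct colours, none equal to $\alpha(v)$; that is, $N(v)$ uses each of the $\Delta$ colours different from $\alpha(v)$ exactly once. This rainbow-neighbourhood property is essentially the only structural fact the rest of the proof needs.

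Next I would establish (c). Fix two distinct colours $i\neq j$ and let $V_i,V_j$ denote the corresponding colour classes. Since $V_i$ and $V_j$ are colour classes there are no edges inside $V_i$ or inside $V_j$, so the subgraph induced by $V_i\cup V_j$ is bipartite with parts $V_i$ and $V_j$. By the rainbow property, each $v\in V_i$ has exactly one neighbour coloured $j$, and symmetrically each $w\in V_j$ has exactly one neighbour coloured $i$. Hence this bipartite subgraph is $1$-regular, which is precisely a perfect matching of $V_i\cup V_j$.

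Finally, (b) is an immediate consequence of (c): a perfect matching between $V_i$ and $V_j$ forces $|V_i|=|V_j|$, and since this holds for every pair of colours, all $\Delta+1$ colour classes share a common size $s$. Summing over the classes gives $n=(\Delta+1)s$, so $\Delta+1$ divides $n$. I do not expect a genuine obstacle anywhere: the whole argument is a direct consequence of the rainbow-neighbourhood observation. The only point that requires a little care is keeping track of the distinction between the \emph{closed} and the open neighbourhood, namely that it is $N[v]$ that must carry all $\Delta+1$ colours, so that exactly $\Delta$ of them land on the $\Delta$ neighbours of $v$ and each does so exactly once.
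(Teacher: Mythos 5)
Your proof is correct and follows essentially the same route as the paper: degree forcing via the rainbow closed neighbourhood for (a), the $1$-regularity of the bipartite graph between two colour classes for (c), and equal class sizes for (b) (the paper phrases (b) as a double-counting argument, which amounts to the same matching observation you use). The reordering (a), (c), (b) is a harmless presentational choice.
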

\begin{proof}
To prove (a), suppose there is a vertex $v$ with degree smaller than $\Delta$. Then $|N[v]|<\Delta+1$ and there exists $a\in [\Delta+1]$ that does not appear in $N[v]$. Thus, $v$ can be recoloured with $a$, a contradiction.

Since $G$ is $\Delta$-regular, each colour appears exactly once in each close neighbourhood. Property (b) follows from a simple double-counting argument on the size of the colour classes.

To prove (c), fix a pair of colours $a$ and $b$. Since each colour appears exactly once in every closed neighbourhood, the graph induced by colours $a$ and $b$ is $1$-regular (that is, a perfect matching).
\end{proof}

A \emph{$k$-lift of a graph $H$} is a graph $G$ with vertex set $V(G)=V(H)\times [k]$ and edge set obtained as follows: for every edge $uv\in E(H)$, we place a perfect matching between the sets $\{u\}\times [k]$ and $\{v\}\times [k]$.
\begin{claim}\label{cla:2}
Let $G$ be a graph on $n$ vertices and maximum degree $\Delta$. Then, $\Omega^*(G)\neq \emptyset$ if and only if $G$ is isomorphic to a $(n/(\Delta+1))$-lift of $K_{\Delta+1}$.
\end{claim}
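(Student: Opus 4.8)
The plan is to prove both implications, leaning entirely on the structural facts already established in Claim~\ref{cla:1}. For the easy direction, I would start from a $k$-lift $G$ of $K_{\Delta+1}$ with $k=n/(\Delta+1)$ and vertex set $[\Delta+1]\times[k]$, and exhibit an explicit frozen colouring: the \emph{canonical} colouring $\alpha(i,s)=i$ that assigns to each vertex the index of the base-graph vertex it lies above. Since the edges of a lift only join vertices in distinct layers, and every two layers are joined because the base graph is complete, $\alpha$ is proper. Moreover, each vertex $(i,s)$ is matched to exactly one vertex in every other layer $j\neq i$, so its closed neighbourhood meets all $\Delta+1$ colour classes; hence every colour appears in $N[(i,s)]$ and $\alpha$ is frozen, giving $\Omega^*(G)\neq\emptyset$.

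For the converse, I would fix a frozen colouring $\alpha\in\Omega^*(G)$ and read off the lift structure directly from it. By Claim~\ref{cla:1}, $G$ is $\Delta$-regular, every colour class $C_i=\alpha^{-1}(i)$ has the same size $k=n/(\Delta+1)$ (so all $\Delta+1$ colours are used), and each pair of colour classes induces a perfect matching. I would choose, for each $i$, an arbitrary bijection $r_i\colon C_i\to[k]$ and set $\phi(v)=(\alpha(v),\,r_{\alpha(v)}(v))$, which is a bijection $V(G)\to[\Delta+1]\times[k]$. Transporting the matchings of $G$ through $\phi$ then defines, for every pair $i\neq j$, a perfect matching between $\{i\}\times[k]$ and $\{j\}\times[k]$; this is well defined precisely because Claim~\ref{cla:1}(c) guarantees a perfect matching between $C_i$ and $C_j$. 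These matchings specify a $k$-lift $G'$ of $K_{\Delta+1}$, and by construction $\phi$ sends edges of $G$ to edges of $G'$ and conversely: every edge of $G$ joins two distinct colour classes by properness and thus lies in one of these matchings, while same-colour pairs (non-adjacent in $G$) land in a common layer of $G'$ and are non-adjacent there. Therefore $\phi$ is an isomorphism and $G$ is a $k$-lift of $K_{\Delta+1}$.

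I do not expect a deep obstacle here; once Claim~\ref{cla:1} is in hand the argument is essentially bookkeeping. The one point that needs care is checking that the structure extracted from $\alpha$ really is a \emph{valid} lift of the \emph{complete} base graph: I must confirm that each of the $\binom{\Delta+1}{2}$ pairs of colour classes is joined by a nonempty perfect matching, so that the base graph is $K_{\Delta+1}$ rather than a proper subgraph, and that no edge appears within a layer. Both follow from Claim~\ref{cla:1}, since the induced matchings are perfect and nonempty as $k\geq 1$, and intra-class edges are forbidden by properness. Keeping the two directions of the adjacency-preservation check symmetric is the only place where a careless argument could slip.
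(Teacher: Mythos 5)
Your proposal is correct and follows essentially the same route as the paper: the converse direction reads the lift structure off the colour classes via Claim~\ref{cla:1} (equal-size independent classes, perfect matchings between each pair), and the forward direction exhibits the canonical colouring that assigns to each fibre vertex the label of its base vertex. Your version merely spells out the isomorphism $\phi$ more explicitly than the paper does; there is no substantive difference.
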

\begin{proof}
Let $\alpha\in \Omega^*(G)$ and let $V_1,\dots, V_{\Delta+1}$ its colour classes. Since $\alpha$ is frozen, by Claim~\ref{cla:1}(b), for every $i\in [\Delta+1]$, $|V_i|=\frac{n}{\Delta+1}$. Since $\alpha$ is proper, each $V_i$ is an independent set. Moreover, for every $i,j\in [\Delta+1]$ with $i\neq j$, $ij\in E(K_{\Delta+1})$ and $V_i\cup V_j$ induces a perfect matching in $G$~(Claim~\ref{cla:1}(c)). Thus $G$ is isomorphic to an $(n/(\Delta+1))$-lift of $K_{\Delta+1}$.

Now, let $G$ be a graph isomorphic to a $k$-lift of $K_{\Delta+1}$. Consider $\beta\in \Omega(K_{\Delta+1})$ and define $\alpha$ by $\alpha(u,i)=\beta(u)$, for every $u\in K_{\Delta+1}$ and $i\in [k]$. By the construction of the lift, $\alpha$ is proper and frozen. Thus, $\Omega^*(G)\neq \emptyset$. 
\end{proof}

Let $G$ be a graph. A subset $X$ of vertices of $G$ is a \emph{module} if for every $v \notin X$ then either $v$ is incident to all the vertices of $X$ or $v$ is incident to none of them.

\begin{lemma}\label{lem:module}
Let $G$ be a graph with maximum degree $\Delta$ and $\Omega^*(G)\neq \emptyset$, $X$ a module of $G$ and $\alpha\in \Omega^*(G)$. Let $\beta$ be the colouring obtained from $\alpha$ by  permuting arbitrarily the colours in $X$. Then $\beta\in \Omega^*(G)$.
\end{lemma}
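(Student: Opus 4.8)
The plan is to show that the operation preserves, for every vertex, the \emph{multiset} of colours appearing in its closed neighbourhood. Since $\alpha$ is frozen this multiset is the full palette $[\Delta+1]$ at every vertex, so once I establish invariance the same will hold for $\beta$, which makes $\beta$ simultaneously proper (no colour repeats in a closed neighbourhood) and frozen, i.e. $\beta\in\Omega^*(G)$.

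First I would record the structural consequences of $X$ being a module of a frozen graph. By Claim~\ref{cla:1}(a) the graph $G$ is $\Delta$-regular, and since $\alpha$ is frozen every closed neighbourhood $N[w]$ is \emph{rainbow}: it contains each of the $\Delta+1$ colours exactly once. Because $X$ is a module, all vertices of $X$ have the \emph{same} set of neighbours outside $X$; call it $B^*=\{v\notin X:\ v\text{ is adjacent to }X\}$. Let $C=\{\alpha(v):v\in B^*\}$ be the colours used on this common external neighbourhood (with $C=\emptyset$ if $B^*=\emptyset$), and set $S=[\Delta+1]\setminus C$. For any $x\in X$ one has $N[x]=N_X[x]\sqcup B^*$, where $N_X[x]$ is the closed neighbourhood of $x$ inside $G[X]$; as $N[x]$ is rainbow and $B^*$ contributes exactly the colours $C$, the colours appearing on $N_X[x]$ are exactly $S$. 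In particular $\alpha$ restricted to $X$ uses precisely the palette $S$, so ``permuting the colours in $X$'' means applying a permutation $\hat\pi$ of $[\Delta+1]$ that permutes $S$ and fixes $C$ pointwise, setting $\beta(v)=\hat\pi(\alpha(v))$ for $v\in X$ and $\beta(v)=\alpha(v)$ otherwise.

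Next I would check that the colour multiset of every closed neighbourhood is unchanged. As $\alpha$ and $\beta$ agree off $X$, only the contribution of $X\cap N[w]$ can differ, and the module property restricts the possibilities: if $w\notin X$ then $X\cap N[w]\in\{\emptyset,X\}$, while if $w\in X$ then $X\cap N[w]=N_X[w]$. In each nonempty case the key point is that the colours appearing on $X\cap N[w]$ under $\alpha$ are exactly the set $S$, each once: for $w\in X$ this is the computation above, and for $w\in B^*$ it follows from $X\subseteq N[w]$ being rainbow together with $\alpha(X)=S$. Since $\hat\pi$ permutes $S$, the multiset $\{\alpha(v):v\in X\cap N[w]\}$ is mapped to itself, so the multiset on the whole of $N[w]$ is preserved; hence every $N[w]$ is rainbow under $\beta$ as well, giving $\beta\in\Omega^*(G)$. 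The step I expect to be the main obstacle is precisely pinning down that the colours on $X\cap N[w]$ always form the \emph{complete} palette $S$ rather than an arbitrary subset, since this is exactly what makes the permutation act trivially on the multiset; the delicate sub-case is $w\in X$, where $X\cap N[w]=N_X[w]$ is only part of $X$, and there the argument relies entirely on the clean splitting $N[w]=N_X[w]\sqcup B^*$ supplied by the module hypothesis combined with the rainbow property of $\alpha$. Once this structural fact is secured, the remainder is routine bookkeeping on multisets.
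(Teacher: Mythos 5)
Your proof is correct and follows essentially the same route as the paper's: a case analysis on whether a vertex lies in $X$, is adjacent to all of $X$, or to none of it, using the module property together with the fact that a frozen colouring makes every closed neighbourhood rainbow. The only presentational difference is that you handle an arbitrary permutation of $\alpha(X)$ in one step via multiset invariance, whereas the paper first reduces to transpositions.
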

\begin{proof}
Let us first note that it suffices to prove Lemma~\ref{lem:module} when the permutation only swaps two colours (\emph{i.e.} is a transposition). The general result is a consequence of the fact that any permutation can be obtained through a sequence of transpositions.

Let $a$ and $b$ be two colours that appear on $X$. We denote by $X_a$ and $X_b$ the (non-empty) subsets of $X$ coloured respectively with $a$ and $b$.
Let $\beta$ be the colouring obtained from $\alpha$ by recolouring the vertices of $X_b$ with $a$ and the ones in $X_a$ with $b$. Let us prove that $\beta$ is proper and that all the colours appear in the closed neighbourhood of each vertex. This ensures that $\beta$ is frozen.
Let $v\in V \setminus X$, then either $v$ is incident to none of the vertices of $X$ or it is incident to all of them. In the first case, no colour is modified in its closed neighbourhood and the conclusion holds. In the latter case, $v$ is incident to all the vertices of $X$, so its colour in $\alpha$ is different from $a$ and $b$. Moreover, after permuting the colours, the set of colours incident to $v$ is not modified.
Now consider $v \in X$. Note that $v$ has no neighbour coloured with $a$ or $b$ in $V \setminus X$. Otherwise, since $X$ is a module, such a neighbour $w$ would be incident to the whole set $X$ which contains vertices with colours $a$ and $b$, contradicting the fact that $\alpha$ is proper. Since $\alpha$ is frozen, $v$ has two neighbours $v_a\in X_a$ and $v_b\in X_b$. After permuting the colours, $v_a$ is coloured in $b$ and $v_b$ in $a$. So all the colours still appear in $N[u]$. We conclude that $\beta\in \Omega^*(G)$.
\end{proof}

\section{Probability of frozen colourings}
%
%


A proper colouring is \emph{$1$-frugal} (or simply \emph{frugal}), if for every $v\in V(G)$ no colour appears more than once in $N_G(v)$. Denote by $\Omega^f(G)$ the set of frugal $(\Delta+1)$-colourings of $G$.
We introduce frugal colourings as they play the role of frozen colourings for graphs with maximum degree $\Delta$. In particular, if $G$ is $\Delta$-regular, then $\Omega^*(G)=\Omega^f(G)$.

Let $H=(V,E)$ be a graph with maximum degree $\Delta$ and let $\alpha\in \Omega(H)$. For every $S\subseteq V$ we denote by $\alpha_S$ the restriction of $\alpha$ onto  $S$. Observe that $\alpha_S\in \Omega(H[S])$; moreover, if $\alpha\in \Omega^f(H)$, then $\alpha_S\in \Omega^f(H[S])$. We write $\alpha(S)$ for the set of colours induced by $\alpha$ in $S$; that is, $\alpha(S)=\cup_{v\in S} \alpha(v)$.


For every $T\subseteq V$ and every $\beta \in \Omega(H[V\setminus T])$, we define the \emph{number of proper extensions of $\beta$ in $T$} as
$$
\Ext_H(T,\beta)=|\{\alpha\in \Omega(H): \,\alpha_{V\setminus T}=\beta\}|\;.
$$
and the \emph{number of frugal extensions of $\beta$ in $T$} as
$$
\Ext^f_H(T,\beta)=|\{\alpha\in \Omega^f(H):\,\alpha_{V\setminus T}=\beta\}|\;.
$$
Note that $\Ext^f(T,\beta)=0$ if $\beta\notin \Omega^f(H[V\sm T])$.

For every   $T\subseteq V$ and every $\sigma\in S_n$, we define the \emph{number of degree extensions according to $\sigma$ in $T$} as
$$
\Ext^d_H(T,\sigma):=\prod_{v\in T}( \Delta+1 -d_\sigma(v))\;,
$$
where $d_\sigma(v)$ is the number of $u\in N(v)$ such that $u\prec v$. Observe that $\Ext^d_H(T,\sigma)$ does not depend on the colouring of $V\sm T$; this will be useful in the proof. If the graph $H$ is clear from the context, we will write $\Ext$, $\Ext^f$ and $\Ext^d$.

Our next lemma relates these three types of extensions.
\begin{lemma}\label{lem:ext}
Let $T\subseteq V$ and let $\beta\in \Omega(H[V\setminus T])$. Let $\sigma\in S_n$ such that $u\prec v$ for every $u\in V\setminus T$ and $v\in T$. Then, we have
\begin{align*}
\Ext^f(T,\beta) \leq \Ext^d(T,\sigma)\leq \Ext(T,\beta)\;.
\end{align*}
\end{lemma}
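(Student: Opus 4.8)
The plan is to prove both inequalities at once by colouring the vertices of $T$ one at a time in the order prescribed by $\sigma$. Write $T=\{v_1,\dots,v_m\}$ with $v_1\prec v_2\prec\cdots\prec v_m$. Since $\sigma$ places every vertex of $V\setminus T$ before every vertex of $T$, when we reach $v_i$ its already-coloured neighbours are exactly its $d_\sigma(v_i)$ preceding neighbours, namely all of its neighbours inside $V\setminus T$ (coloured by $\beta$) together with the neighbours $v_j\in T$ with $j<i$. The whole argument comes down to comparing, at step $i$, the number of admissible colours for $v_i$ with the factor $\Delta+1-d_\sigma(v_i)$, using a lower bound for the right-hand inequality and an upper bound for the left-hand one.

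For the right-hand inequality $\Ext^d(T,\sigma)\le \Ext(T,\beta)$ I would argue greedily. Call a colouring of $(V\setminus T)\cup\{v_1,\dots,v_i\}$ agreeing with $\beta$ a proper $i$-prefix if it is proper on the induced subgraph; there is exactly one proper $0$-prefix, namely $\beta$. Given a proper $(i-1)$-prefix, the colour of $v_i$ need only avoid the colours of its $d_\sigma(v_i)$ preceding neighbours, so at least $\Delta+1-d_\sigma(v_i)$ colours extend it to a proper $i$-prefix (and $\Delta+1-d_\sigma(v_i)\ge 1$ since $d_\sigma(v_i)\le d(v_i)\le\Delta$). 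As this bound does not depend on the prefix, induction on $i$ gives at least $\prod_{j\le i}(\Delta+1-d_\sigma(v_j))$ proper $i$-prefixes, and $i=m$ yields $\Ext(T,\beta)\ge\Ext^d(T,\sigma)$.

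For the left-hand inequality $\Ext^f(T,\beta)\le \Ext^d(T,\sigma)$ I would run the same process but bound the branching from above, the new ingredient being frugality. If $\beta\notin\Omega^f(H[V\setminus T])$ then $\Ext^f(T,\beta)=0$ and there is nothing to prove, so assume $\beta$ is frugal. The map sending a frugal extension $\alpha$ to the tuple $(\alpha(v_1),\dots,\alpha(v_m))$ is injective, so it suffices to bound the number of such tuples. For any frugal $\alpha$, frugality at $v_i$ forces all neighbours of $v_i$ to receive \emph{distinct} colours; in particular its $d_\sigma(v_i)$ preceding neighbours use exactly $d_\sigma(v_i)$ distinct colours, all of which $\alpha(v_i)$ must avoid. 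Hence, along any prefix that completes to a frugal colouring, there are at most $\Delta+1-d_\sigma(v_i)$ choices for $\alpha(v_i)$. This per-step bound is a fixed constant, so the tree of completable prefixes has at most $\prod_{j\le i}(\Delta+1-d_\sigma(v_j))$ nodes at level $i$; at level $m$ this bounds the number of frugal extensions, giving $\Ext^f(T,\beta)\le\Ext^d(T,\sigma)$.

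The only genuinely delicate point is the upper bound, and it is precisely where frugality is needed: for an arbitrary proper colouring the preceding neighbours contribute \emph{at most} $d_\sigma(v_i)$ forbidden colours, which only gives a \emph{lower} bound on the admissible colours; frugality upgrades this to \emph{exactly} $d_\sigma(v_i)$ distinct forbidden colours, turning it into the matching \emph{upper} bound $\Delta+1-d_\sigma(v_i)$. I would also take care to note that the distinctness of the preceding-neighbour colours is a property of the prefix alone (all those vertices are already coloured), so it may legitimately be used to bound the branching at each level of the prefix tree without reference to vertices coloured later.
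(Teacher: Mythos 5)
Your proof is correct and follows essentially the same route as the paper: both inequalities are obtained by extending $\beta$ greedily along $\sigma$ and comparing the per-vertex branching with $\Delta+1-d_\sigma(v)$, using that for a frugal colouring the already-coloured neighbours of $v$ carry exactly $d_\sigma(v)$ distinct forbidden colours (upper bound), while for an arbitrary proper extension they forbid at most $d_\sigma(v)$ colours (lower bound). Your explicit handling of non-frugal $\beta$ and of prefixes with a repeated neighbour colour matches the paper's two-case analysis.
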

\begin{proof}
The first inequality follows from the fact that every frugal colouring can obtained by extending $\beta$ greedily in the order given by $\sigma$. When assigning a colour to $v\in T$ there are two possibilities,
\begin{itemize}
\item[-] there are two coloured neighbours of $v$ that have the same colour, thus there are no frugal extensions of the current extension of $\beta$;
\item[-] there are no repeated colours in the coloured neighbourhood of $v$, thus there are exactly $\Delta+1-d_\sigma(v)$ colours available for $v$ to extend $\beta$ to a frugal colouring.
\end{itemize}

The second inequality follows from the fact that $\Ext^d(T,\sigma)$ is a lower bound for the number of greedy extensions of $\beta$ following the order $\sigma$: when we assign a colour to $v\in T$, there at most $d_\sigma(v)$ colours forbidden since some colours can be repeated in the coloured neighbourhood. The number of greedy extensions of $\beta$ following a fixed order is equal to the number of proper extensions of $\beta$.
\end{proof}

The following is the main lemma of the paper and bounds the proportion of frugal colourings of a graph.
\begin{lemma}\label{lem:main}
Let $\Delta,n\in \mathbb{N}$ with $3\leq \Delta\leq n-1$. Let $H=(V,E)$ be a graph on $n$ vertices, maximum degree $\Delta$ that contains no cliques of size $\Delta+1$. Suppose that $x\in V(H)$ satisfies $d_H(x)=\Delta$. Then, we have
\begin{align*}
\frac{|\Omega^f(H)|}{|\Omega(H)|}&\leq  \frac{6}{7}\cdot  
\frac{|\Omega^f(H[V\setminus N[x]])|}{|\Omega(H[V\setminus N[x]])|}\;.
\end{align*}
\end{lemma}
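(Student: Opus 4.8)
The plan is to peel off the closed neighbourhood $T:=N[x]$, which has exactly $\Delta+1$ vertices since $d_H(x)=\Delta$, and to compare the frugal and proper extension counts over the rest $W:=V\sm N[x]$. First I would fix an order $\sigma\in S_n$ that places every vertex of $W$ before every vertex of $T$, and inside $T$ colours the neighbours of $x$ first and $x$ last. Writing $D:=\Ext^d(T,\sigma)$ (which, as noted before Lemma~\ref{lem:ext}, does not depend on the colouring of $W$), and using that restrictions of proper, resp.\ frugal, colourings stay proper, resp.\ frugal, I decompose
\begin{align*}
|\Omega^f(H)|=\sum_{\beta\in\Omega^f(H[W])}\Ext^f(T,\beta),\qquad |\Omega(H)|=\sum_{\beta\in\Omega(H[W])}\Ext(T,\beta).
\end{align*}

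The frugal side is immediate: Lemma~\ref{lem:ext} gives $\Ext^f(T,\beta)\le D$ for every $\beta$, hence $|\Omega^f(H)|\le D\,|\Omega^f(H[W])|$. The whole lemma therefore reduces to the matching lower bound $|\Omega(H)|\ge \tfrac76\,D\,|\Omega(H[W])|$. Since $x$ is coloured last and is adjacent only to $N(x)$, a proper colouring $\gamma$ of $H-x$ extends to $x$ in exactly $\Delta+1-|\gamma(N(x))|$ ways, which is at least $1$ always and at least $2$ precisely when $N(x)$ is not rainbow under $\gamma$. Summing over $\gamma\in\Omega(H-x)$ gives $|\Omega(H)|\ge |\Omega(H-x)|+R$, where $R$ counts the proper colourings of $H-x$ under which some colour repeats on $N(x)$. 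As $x$ comes last, the quantities $d_\sigma(y)$ for $y\in N(x)$ are the same in $H$ and $H-x$, so the same greedy argument as in Lemma~\ref{lem:ext}, applied to $H-x$, gives $|\Omega(H-x)|\ge D\,|\Omega(H[W])|$. It thus suffices to prove $R\ge\tfrac16\,D\,|\Omega(H[W])|$.

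This is where the hypothesis that $H$ has no $K_{\Delta+1}$ enters: because $d_H(x)=\Delta$, the set $N(x)$ is not a clique and so contains a non-edge $uw$. Every proper colouring of $H-x$ with $\gamma(u)=\gamma(w)$ makes $N(x)$ non-rainbow and hence is counted by $R$, so the task becomes a lower bound on the number of such colourings. The hard part will be that this bonus is not available pointwise: for a fixed colouring of $W$ and of $N(x)\sm\{u,w\}$, the palettes left available to $u$ and to $w$ can be disjoint, so no completion has $\gamma(u)=\gamma(w)$. I therefore expect to argue on average, summing over $\beta\in\Omega(H[W])$ and over the colouring of $N(x)\sm\{u,w\}$, and to combine two sources of non-rainbow extensions: when $u$ and $w$ share many neighbours their available palettes are forced to meet (two sufficiently large subsets of the colours free at both vertices must intersect), while in general $x$ already gains a choice whenever $u$ or $w$ repeats a colour present on $N(x)\sm\{u,w\}$. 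Quantifying $R$ by these contributions and extracting the worst case---a short extremal computation over the three vertices $u,w,x$ that uses only $\Delta\ge 3$ and $|N(x)\sm\{u,w\}|=\Delta-2$---is exactly what pins the constant to $\tfrac16$, giving the factor $6/7$.
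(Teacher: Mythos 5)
Your reduction is set up cleanly and the easy half is fine: with $x$ placed last in $\sigma$, Lemma~\ref{lem:ext} does give $|\Omega^f(H)|\le D\,|\Omega^f(H[W])|$, the identity $|\Omega(H)|=\sum_{\gamma\in\Omega(H-x)}(\Delta+1-|\gamma(N(x))|)\ge |\Omega(H-x)|+R$ is correct, and $|\Omega(H-x)|\ge D\,|\Omega(H[W])|$ holds because deleting $x$ does not change $d_\sigma(y)$ for $y\in N(x)$. But the entire content of the lemma has been pushed into the one claim you do not prove, namely $R\ge\tfrac16\,D\,|\Omega(H[W])|$; the final paragraph is a plan (``I expect to argue on average\ldots a short extremal computation\ldots is exactly what pins the constant''), not an argument, and the obstruction you yourself identify --- that for a fixed $\beta$ the palettes of $u$ and $w$ can be disjoint --- is real and is not resolved by anything you write. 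A single non-edge is genuinely insufficient: when $N(x)$ is a clique minus the one edge $uw$, every vertex of $N(x)\setminus\{u,w\}$ is a twin of $x$, a proper colouring already forces $\Delta-2$ distinct colours there, and whether $u$ and $w$ can collide at all depends on $\beta$; the paper's gain in this regime (Case~2.1) does not come from counting collisions of $u,w$ but from the multiplicative boost $\prod_{w\in T}\bigl(1+\tfrac{1}{\Delta+1-d_\sigma(w)}\bigr)\ge t+1$ to the number of extensions of the twin set once $u,w$ share a colour, and even that requires the shared colour to avoid $S(\beta)$.

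There is a second, more structural problem: by bounding the frugal side with the crude $\Ext^f(T,\beta)\le D$ uniformly, you have committed to extracting the full factor $7/6$ from the proper side alone, i.e.\ to proving $|\Omega(H)|\ge\tfrac76 D\,|\Omega(H[W])|$. The paper does not prove this and appears to avoid it deliberately: when many colours appear on $N^2(x)$ (its Part~1, $|S(\beta)|\ge\Delta/6+1$), it gets the factor from the \emph{frugal} side, because colours of $x$ lying in $S(\beta)$ kill all frugal extensions, while on the proper side it only uses the trivial degree bound. Your framework cannot recover that gain, and the swapping argument the paper uses on the proper side (Case~2.2) explicitly needs $|S(\beta)|\le\Delta/6$ to guarantee that recolouring $v_i$ with $\gamma(u_i)$ stays proper, plus a lower bound on the number of non-twins. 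So the inequality you still owe is not a ``short extremal computation over $u,w,x$'': it requires (at least) the dichotomies on $|S(\beta)|$ and on the twin structure of $N[x]$ that constitute the bulk of the paper's proof, and as stated your target may not even be provable without reintroducing the frugal-side gain. As it stands the proposal has a genuine gap at its only nontrivial step.
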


\begin{proof}

For every $\beta\in \Omega(H[V\setminus N[x]])$, 
let $S(\beta)= \beta(N_H^2(x))\subseteq [\Delta+1]$; that is, the set of colours induced by $\beta$ in the vertices at distance two from $x$.

We split the colourings of $H$ in terms of $|S(\beta)|$. For the sake of simplicity, we use $\cA:=\Omega(H)$ and $\cB:=\Omega(H[V\setminus N[x]])$. Define
\begin{align*}
\cA_1 &:= \{\alpha\in \cA:\, \alpha_{V\setminus N[x]}=\beta, |S(\beta)|\geq  \Delta/6+1 \}\\
\cA_2 &:= \cA\setminus \cA_1\;.
\end{align*}
We define these two sets similarly for each $\cA^f,\cB,\cB^f$, instead of $\cA$.

Note that to prove the lemma it suffices to show that for $i\in\{1,2\}$, we have
$$
 \frac{|\cA_i^f|}{|\cA_i|}  \leq  \frac{6}{7} \cdot \frac{|\cB^f_i|}{|\cB_i|}\;. 
$$

We split the proof into two parts:

\smallskip

\noindent \textbf{Part 1:}  
In this part, we prove that 
$$
 \frac{|\cA_1^f|}{|\cA_1|}  \leq  \frac{6}{7} \cdot \frac{|\cB^f_1|}{|\cB_1|}\;. 
$$
Fix  $\beta\in \cB_1$.  Let $\sigma$ be a linear order of $[n]$ such that
$$
v \prec x \prec y\;,
$$
for every $v\in V\setminus N[x]$ and every $y\in N(x)$.

We first compare the number of frugal extensions of $\beta$ with the number of degree extensions according to $\sigma$.

For every $c\in [\Delta+1]$, let $\beta_c$ be the colouring of $H[V\sm N(x)]$ obtained from $\beta$ by assigning colour $c$ to $x$. If $c\in S(\beta)$, then $\Ext^f(N(x),\beta_c)=0$, since for any extension $\alpha\in \cA_1$ of $\beta_c$ there exists $u\in N(x)$ such that $c$ appears at least twice in its neighbourhood.

Since $\beta\in \cB_1$, we have $|S(\beta)|\geq \Delta/6+1$. Then, by Lemma~\ref{lem:ext},
$$
\Ext^f(N[x],\beta) = \sum_{c\in [\Delta+1]} \Ext^f(N(x),\beta_c) = \sum_{c\in [\Delta+1]\setminus S(\beta)} \Ext^f(N(x),\beta_c) \leq  \frac{5\Delta}{6}\cdot \Ext^d(N(x),\sigma)\;.
$$
Let us now count the number of proper extensions of $\beta$. Since $\beta$ assigns no colour to $N(x)$, there are $\Delta+1$ choices to colour $x$. Thus, by Lemma~\ref{lem:ext},
$$
\Ext(N[x],\beta) =\sum_{c\in [\Delta+1]}  \Ext(N(x),\beta_c) \geq  (\Delta+1) \Ext^d(N(x),\sigma)
$$
If $\beta\in \cB_1\sm\cB^f_1$, then $\Ext^f(N[x],\beta)=0$ as any extension of a non-frugal colouring is non-frugal. It follows that 
\begin{align*}
|\cA^f_1| &=  \sum_{\beta\in \cB_1}  \Ext^f(N[x],\beta) = \sum_{\beta\in \cB^f_1}  \Ext^f(N[x],\beta)
\leq \frac{5\Delta}{6}\cdot  \Ext^d(N(x),\sigma)  |\cB^f_1|\;,
\end{align*}
and that
\begin{align*}
|\cA_1| &=  \sum_{\beta\in \cB_1} \Ext(N[x],\beta)
\geq  (\Delta+1) \cdot\Ext^d(N(x),\sigma)  |\cB_1|
\end{align*}
We conclude that 
\begin{align*}
\frac{|\cA^f_1|}{|\cA_1|} &\leq \frac{5\Delta/6}{\Delta+1}\cdot \frac{|\cB^f_1|}{|\cB_1|} \leq \frac{6}{7}\cdot \frac{|\cB^f_1|}{|\cB_1|}\;.
\end{align*}
\smallskip

\noindent \textbf{Part 2:}  
In this part, we prove that 
$$
 \frac{|\cA_2^f|}{|\cA_2|}  \leq  \frac{6}{7} \cdot \frac{|\cB^f_2|}{|\cB_2|}\;. 
$$

Fix $y,z\in N(x)$ such that $yz\notin E$. Such a pair exists since $d(x)=\Delta$ and $H$ contains no cliques of size $\Delta+1$. 

A vertex $w\in N[x]$ is a \emph{twin} of $x$ if and only if $N[x]=N[w]$. Let $T=\{w_1,\dots,w_t\}\subseteq N[x]$ denote the set of twins of $x$ with $w_1=x$. Note that $y,z\notin T$ and let $S=N[x]\setminus T=\{u_1,\dots,u_s\}$ with $u_1=y$ and $u_2=z$. 


Consider a linear order $\sigma$ of $[n]$ such that
$$
v \prec y \prec z \prec u_3\prec \dots\prec u_s\prec w_t\prec\dots\prec w_2\prec x\;,
$$
for every $v\in V\setminus N[x]$.

\medskip 
\noindent We split the proof into two cases:
\smallskip

\noindent \underline{\textit{Case 2.1:}}  $t\geq 17\Delta/30$.
\smallskip
%

Let $\beta\in \cB_2$.  We first bound from above the number of frugal extensions of $\beta$. For $c,c'\in [\Delta+1]$, let $\beta_{c,c'}$ be the colouring of $H[(V\setminus N[x]) \cup \{y,z\}]$ obtained from $\beta$ by assigning colour $c$ to $y$ and colour $c'$ to $z$. If $c=c'$, then $\Ext^f(N[x]\sm \{y,z\}, \beta_{c,c'}) =0$, since for any extension $\alpha\in \cA_2$ of $\beta_{c,c'}$, the colour $c$ appears at least twice in the neighbourhood of $x$. 

By Lemma~\ref{lem:ext},
$$
\Ext^f(N[x],\beta) = \sum_{c,c'\in [\Delta+1]  \atop c\neq c'}\Ext^f(N[x]\setminus\{y,z\},\beta_{c,c'}) \leq  (\Delta+1)\Delta \cdot \Ext^d(N[x]\setminus\{y,z\},\sigma)\;.
$$
Let us now count the number of non-frugal extensions of $\beta$ depending on  whether $\beta$ is frugal or not.


%

Note that if $c\in [\Delta+1]\setminus S(\beta)$, then $\beta_{c,c}$ is a proper colouring of the graph $H[(V\setminus N[x]) \cup \{y,z\}]$ but any extension of $\beta_{c,c}$ is not frugal. We can extend $\beta_{c,c}$ to a proper colouring in a greedy way following the order $\sigma$. A key observation is that when assigning a colour to $w\in T$, there will be at least $\Delta+2-d_\sigma(w)$ choices for it (instead of $\Delta+1-d_\sigma(w)$), since $y,z\in N(w)$, $y,z\prec w$ and $y,z$ have the same colour in any extension of $\beta_{c,c}$. Thus, for every $c\in [\Delta+1]\setminus S(\beta)$, we have
\begin{align*}
\Ext(N[x]\setminus\{y,z\},\beta_{c,c})
&\geq\sum_{\beta'\in\Omega(H[V\sm T])\atop \beta'_{(V\sm N[x]) \cup \{y,z\}}=\beta_{c,c}} \Ext(T,\beta') \\
&\geq\sum_{\beta'\in\Omega(H[V\sm T])\atop \beta'_{(V\sm N[x]) \cup \{y,z\}}=\beta_{c,c}} \prod_{w\in T} (\Delta+2-d_\sigma(w)) \\
&= \sum_{\beta'\in\Omega(H[V\sm T])\atop \beta'_{(V\sm N[x]) \cup \{y,z\}}=\beta_{c,c}} \!\!\!\!\!\!\!\!\Ext^d(T,\sigma)\prod_{w\in T}\left(1+\frac{1}{\Delta+1 -d_\sigma(w)}\right)\\
&\geq \Ext^d(N[x]\setminus\{y,z\},\sigma)\prod_{w\in T}\left(1+\frac{1}{\Delta+1 -d_\sigma(w)}\right)\;.
\end{align*}
Note that $w_i$ is the $i$-th largest vertex according to the linear order $\sigma$. Moreover, since $w_i$ is a twin of $x$, $d_H(w_i)=d_H(x)=\Delta$. Thus  $d_\sigma(w_i)\geq \Delta-i+1$ and it follows that
$$
\prod_{w\in T}\left(1+\frac{1}{\Delta+1 -d_\sigma(w)}\right)\geq \prod_{i=1}^t\left(1+\frac{1}{i}\right)=t+1  \geq \frac{17\Delta}{30}\;,
$$
which implies that for any $\beta\in \cB_2$,
\begin{align*}
\Ext(N[x]\setminus\{y,z\},\beta_{c,c})
&\geq  \frac{17\Delta}{30} \cdot \Ext^d(N[x]\setminus\{y,z\},\sigma)\;.
\end{align*}
If $\beta\in \cB_2^f$ and since $|S(\beta)|\leq \Delta/6$, the number of non-frozen extensions satisfies,
\begin{align*}
\Ext(N[x],\beta)-\Ext^f(N[x],\beta) &\geq  
\sum_{c\in [\Delta+1]\setminus S(\beta)}\Ext(N[x]\setminus\{y,z\},\beta_{c,c})\\
&\geq  \frac{17\Delta^2}{36} \cdot \Ext^d(N[x]\setminus\{y,z\},\sigma)\;.
\end{align*}
If $\beta\in\cB_2\setminus \cB_2^f$, then in addition to the extensions of $\beta_{c,c}$, any extension of $\beta_{c,c'}$ with $c\neq c'$ is non-frozen. Using Lemma~\ref{lem:ext}, we obtain
\begin{align*}
\Ext(N[x],\beta)-&\Ext^f(N[x],\beta) \geq  
\!\!\!\!\!\!\!\!\sum_{c,c'\in [\Delta+1]\setminus S(\beta)  \atop c\neq c'}\!\!\!\!\!\!\!\!\Ext(N[x]\setminus\{y,z\},\beta_{c,c'})+\!\!\!\!\!\!\!\sum_{c\in [\Delta+1]\setminus S(\beta)}\!\!\!\!\!\!\!\!\Ext(N[x]\setminus\{y,z\},\beta_{c,c})\\
&\geq (5\Delta/6+1) \left((5\Delta/6)\Ext^d(N[x]\setminus\{y,z\},\sigma)+(17\Delta/30) \cdot \Ext^d(N[x]\setminus\{y,z\},\sigma)\right) \\
&=  \left(\frac{5\Delta}{6}+1\right) \frac{7\Delta}{5}\cdot \Ext^d(N[x]\setminus\{y,z\},\sigma)\;.\\
\end{align*}
%
As before, it follows that 
\begin{align*}
|\cA^f_2| 
&\leq  (\Delta+1)\Delta \cdot \Ext^d(N[x]\setminus\{y,z\},\sigma)|\cB^f_2|\;,
\end{align*}
and
\begin{align*}
|\cA_2\setminus \cA^f_2| &\geq  \frac{17\Delta^2}{36} \cdot \Ext^d(N[x]\setminus\{y,z\},\sigma) |\cB_2^f|+\left(\frac{5\Delta}{6}+1\right)\frac{7\Delta}{5} \cdot \Ext^d(N[x]\setminus\{y,z\},\sigma) |\cB_2\setminus \cB_2^f|\\
\end{align*}
We conclude that 
\begin{align*} 
\frac{|\cA^f_2|}{|\cA_2|}&=
 \frac{|\cA^f_2|}{|\cA_2\setminus \cA^f_2|+|\cA^f_2|} 
\leq \frac{(\Delta+1)\Delta |\cB^f_2|}{(53/36)\Delta^2|\cB_2^f|  +(5\Delta/6+1)(7\Delta/5)|\cB_2\setminus \cB_2^f|}
\leq \frac{6}{7}\cdot \frac{|\cB^f_2|}{|\cB_2|}\;.
\end{align*}

\noindent \underline{\textit{Case 2.2:}} $t\leq 17\Delta/30$.
\smallskip

Let $\beta\in \cB_2$. Define 
\begin{align*}
\cR(\beta)&=\{\gamma\in\Omega (H[V\setminus\{x\}]):\, \gamma_{V\setminus N[x]}=\beta\},\\
\cR'(\beta)&=\{\gamma\in \cR(\beta):\,  |\gamma(N_H(x))|=\Delta\}\;.
\end{align*}
Note that $|\cR(\beta)|= \Ext_{H[V\setminus \{x\}]}(N_H(x),\beta)\geq \Ext^d_{H[V\setminus \{x\}]}(N_H(x),\sigma)= \Ext^d_H(N_H[x],\sigma)$, where the last equality holds since $d_{\sigma}(x)=\Delta$.

We will show that $|\cR'(\beta)|\leq (5/6) |\cR(\beta)|$. Note that for every $i\in [s]$, there exists at least one vertex $v_i\in N(x)$ such that $u_i v_i\notin E$; otherwise $u_i$ and $x$ would be twins. For each $\gamma\in \cR'(\beta)$ and $i\in [s]$, let $\gamma_i $ be the colouring defined as 
$$
\gamma_i(v):= \begin{cases}
    \gamma(u_i)       & \quad \text{if } v=v_i\\
      \gamma(v)       & \quad \text{otherwise.}\\
  \end{cases}
$$
If $\gamma(u_i)\notin S(\beta)$, then $\gamma_i\in \cR(\beta)\setminus \cR'(\beta)$. Since each $u_i$ has a unique colour within $N(x)$ in $\gamma$, $s\geq 13\Delta/30$ and $|S(\beta)|\leq \Delta/6$, each $\gamma$ gives rise to at least $4\Delta/15$ different colourings $\gamma_i$.

Given a colouring $\hat\gamma\in \cR(\beta)\setminus \cR'(\beta)$, let us compute how many pairs $(\gamma,i)\in \cR'(\beta)\times [s]$ are such that $\gamma_i=\hat\gamma$. If at least one such pair exists, then $\hat\gamma$ satisfies $\hat\gamma(N(x))=\Delta-1$. Since $d_H(x)=\Delta$, there is only one colour repeated in $N(x)$ and it appears exactly twice.  Since there are two colours not in $\hat\gamma (N(x))$, there are at most $4$ pairs $(\gamma,i)$ such that $\gamma_i=\hat \gamma$.

A double-counting argument over the pairs $(\gamma,\hat\gamma)$ where $\gamma\in\cR'(\beta)$ and $\hat\gamma\in \cR(\beta)\setminus \cR(\beta)$ with $\hat \gamma=\gamma_i$ for some $i\in [s]$, gives $(4\Delta/15) |\cR'(\beta)|\leq 4|\cR(\beta)\setminus \cR'(\beta)|$, implying
\begin{align*}
\frac{|\cR'(\beta)|}{|\cR(\beta)|}\leq \frac{|\cR'(\beta)|}{|\cR(\beta)\setminus \cR'(\beta)|}\leq  \frac{1}{\Delta/15 +1}=\frac{5}{6}\;,
\end{align*}
since $\Delta\geq 3$.

Now observe that if $\gamma \in \cR(\beta)\setminus \cR'(\beta)$, then 
$$
\Ext(\{x\},\gamma)\geq 2\cdot\Ext^d(\{x\},\sigma)=2\;,
$$
while if $\gamma \in \cR'(\beta)$,
$$
\Ext(\{x\},\gamma)= \Ext^d(\{x\},\sigma)=1\;.
$$
It follows that for every $\beta\in\cB_2$,
\begin{align*}
\Ext(N[x],\beta)&= \sum_{\gamma \in \cR'(\beta)} \Ext(\{x\},\gamma) + \sum_{\gamma \in \cR(\beta)\setminus \cR'(\beta)} \Ext(\{x\},\gamma)\\
&\geq  |\cR'(\beta)| + 2|\cR(\beta)\setminus \cR'(\beta)|]\\
&\geq \frac{7}{6} |\cR(\beta)|\\
&= \frac{7}{6} \cdot\Ext^d(N[x],\sigma) \;.
\end{align*}

Using Lemma~\ref{lem:ext} again, it follows that 
\begin{align*}
|\cA^f_2| 
&= \sum_{\beta\in \cB^f_2}\Ext^f(N[x],\beta) \leq \Ext^d(N[x],\sigma)|\cB^f_2|\;,
\end{align*}
and
\begin{align*}
|\cA_2| &=\sum_{\beta\in \cB_2}\Ext(N[x],\beta) \geq  \frac{7}{6}\cdot \Ext^d(N[x],\sigma) |\cB_2|
\end{align*}
Thus,
\begin{align*} 
\frac{|\cA^f_2|}{|\cA_2|}& \leq \frac{6}{7}\cdot \frac{|\cB^f_2|}{|\cB_2|}\;.
\end{align*}

\end{proof}

We will prove our main theorem by induction using Lemma~\ref{lem:main} and the following observation.
\begin{observation}
Let $G$ be a $\Delta$-regular graph of order $n$. Suppose that $\Omega^*(G)\neq \emptyset$ and fix $\alpha\in \Omega^*(G)$. For any $c\in [\Delta+1]$, the set $X=\alpha^{-1}(c)$ satisfies
\begin{itemize}
\item[-] $|X|= \frac{n}{\Delta+1}$.
\item[-] $\{N[x], x\in X\}$ is a partition of $V(G)$.
\end{itemize}
\end{observation}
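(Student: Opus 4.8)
The plan is to establish the two bullet points in turn, both following quickly from the structural facts already recorded in Claim~\ref{cla:1}. The first bullet is immediate: by Claim~\ref{cla:1}(b) all colour classes of a frozen colouring have the same size and $\Delta+1$ divides $n$, so each of the $\Delta+1$ classes --- in particular $X=\alpha^{-1}(c)$ --- has exactly $n/(\Delta+1)$ vertices.

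For the second bullet I would first isolate the key local fact, already observed inside the proof of Claim~\ref{cla:1}: since $G$ is $\Delta$-regular we have $|N[v]|=\Delta+1$ for every $v\in V(G)$, and since $\alpha$ is frozen each of the $\Delta+1$ colours appears in $N[v]$; as there are exactly $\Delta+1$ vertices in $N[v]$, each colour appears \emph{exactly once} in every closed neighbourhood. I would also use throughout the symmetry of the closed-neighbourhood relation, namely that $v\in N[x]$ if and only if $x\in N[v]$.

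With this in hand, covering and disjointness are short. For covering, fix any $v\in V(G)$ and apply the local fact to colour $c$: there is a vertex $x\in N[v]$ with $\alpha(x)=c$, i.e.\ $x\in X$ and (by symmetry) $v\in N[x]$, so the family $\{N[x]:x\in X\}$ covers $V(G)$. For disjointness, suppose $v\in N[x_1]\cap N[x_2]$ for distinct $x_1,x_2\in X$. By symmetry $x_1,x_2\in N[v]$, and both are coloured $c$; this contradicts the fact that $c$ appears exactly once in $N[v]$. Hence the sets $N[x]$ are pairwise disjoint, and together with covering this shows that $\{N[x]:x\in X\}$ is a partition of $V(G)$. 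As a consistency check, the partition has $|X|=n/(\Delta+1)$ blocks each of size $\Delta+1$, accounting for all $n$ vertices.

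I do not expect a genuine obstacle here: everything reduces to the ``exactly once'' property of frozen colourings on $\Delta$-regular graphs. The only point deserving a little care is to phrase the disjointness argument so that the degenerate subcase $v\in X$ (where $v$ itself carries colour $c$) is absorbed into the uniform statement that $c$ appears exactly once in $N[v]$, rather than being treated as a separate case, and to be consistent about translating between ``$v\in N[x]$'' and ``$x\in N[v]$'' via the symmetry of the closed neighbourhood.
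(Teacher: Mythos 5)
Your proof is correct, and it follows the route the paper clearly intends: the paper states this Observation without proof, treating it as immediate from Claim~\ref{cla:1} and the fact (noted in the proof of that claim) that in a $\Delta$-regular graph with a frozen colouring every colour appears exactly once in each closed neighbourhood. Your covering/disjointness argument via the symmetry of $N[\cdot]$ is exactly the missing justification, so nothing further is needed.
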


\begin{proof}[Proof of Theorem~\ref{thm:main}]
We may assume that $\Omega^*(G)\neq\emptyset$, as otherwise the theorem clearly holds. Let $s\in \mathbb{N}$ such that $n=s(\Delta+1)$.  Let $X=\{x_1,\dots,x_s\}$ be the set given by the previous observation with $c=1$. Let $V_1=V(G)$ and let $H_1=G[V_1]$. For every $i\in [s]$, define $V_{i+1}= V_i\sm N_{H_{i}}[x_i]$ and $H_{i+1}=G[V_{i+1}]$. By the properties of $X$, we have $d_{H_i}(x_i)=\Delta$. Also note that $H_{i}$ contains no cliques of size $\Delta+1$.

Let $\Pr_i$ denote the uniform probability space over $\Omega(H_i)$ and let $\alpha_i$ be a uniformly random colouring of $H_i$.
For every $i\in [s]$, Lemma~\ref{lem:main} with $H=H_i$, $x=x_i$ implies
\begin{align*}
\Pr_i(\alpha_i \in \Omega^f(H_i))&= \frac{|\Omega^f(H_i)|}{|\Omega(H_i)|}
\leq   \frac{6}{7} \cdot  \frac{|\Omega^f(H_{i+1})|}{|\Omega(H_{i+1})|}
\leq   \frac{6}{7} \cdot \Pr_{i+1}(\alpha_{i+1} \in \Omega^f(H_{i+1}))\;.
\end{align*}
Since $G$ is $\Delta$-regular, a $(\Delta+1)$-colouring is frugal if and only if it is frozen.  We conclude that
\begin{align*}
\Pr(\alpha \in \Omega^*(G))&=\Pr_1(\alpha_1 \in \Omega^f(G))
\leq (6/7)^{\frac{n}{\Delta+1}}\;.\qedhere
\end{align*}

\end{proof}

 \section{Mixing time of the non-frozen component}\label{sec:tmix}

We first construct a graph $J=J(\ell)$, for $\ell\in \mathbb{N}$ (see Figure~\ref{fig:expofrozen} for an illustration). 
Let $V_1,\ldots,V_{\Delta+1}$ with $|V_i|=\ell$. The vertex set of $J(\ell)$ is $\cup_{i=1}^{\Delta+1} V_i$. For every $j\in [\Delta+1]$, denote by $v_1^j,\ldots,v_{\ell}^j$ the vertices of $V_j$.
The set of edges is $E(J)= E_1(J)\cup E_2(J)$ where
\begin{align*}
E_1(J)&= \{v_{i+1}^1 v_{i}^{\Delta+1} : i\in [\ell]\},\\
E_2(J)&= \{v_i^j v_i^{j'} : i\in [\ell],\,j,j'\in[\Delta+1],\, j \neq j'\}\setminus \{v_{i}^1 v_{i}^{\Delta+1} : i\in [\ell]\}\;.
\end{align*}
Here, addition is modulo $\ell$. Note that $J$ is $\Delta$-regular.

\begin{figure}
 \centering
 \includegraphics[scale=0.6]{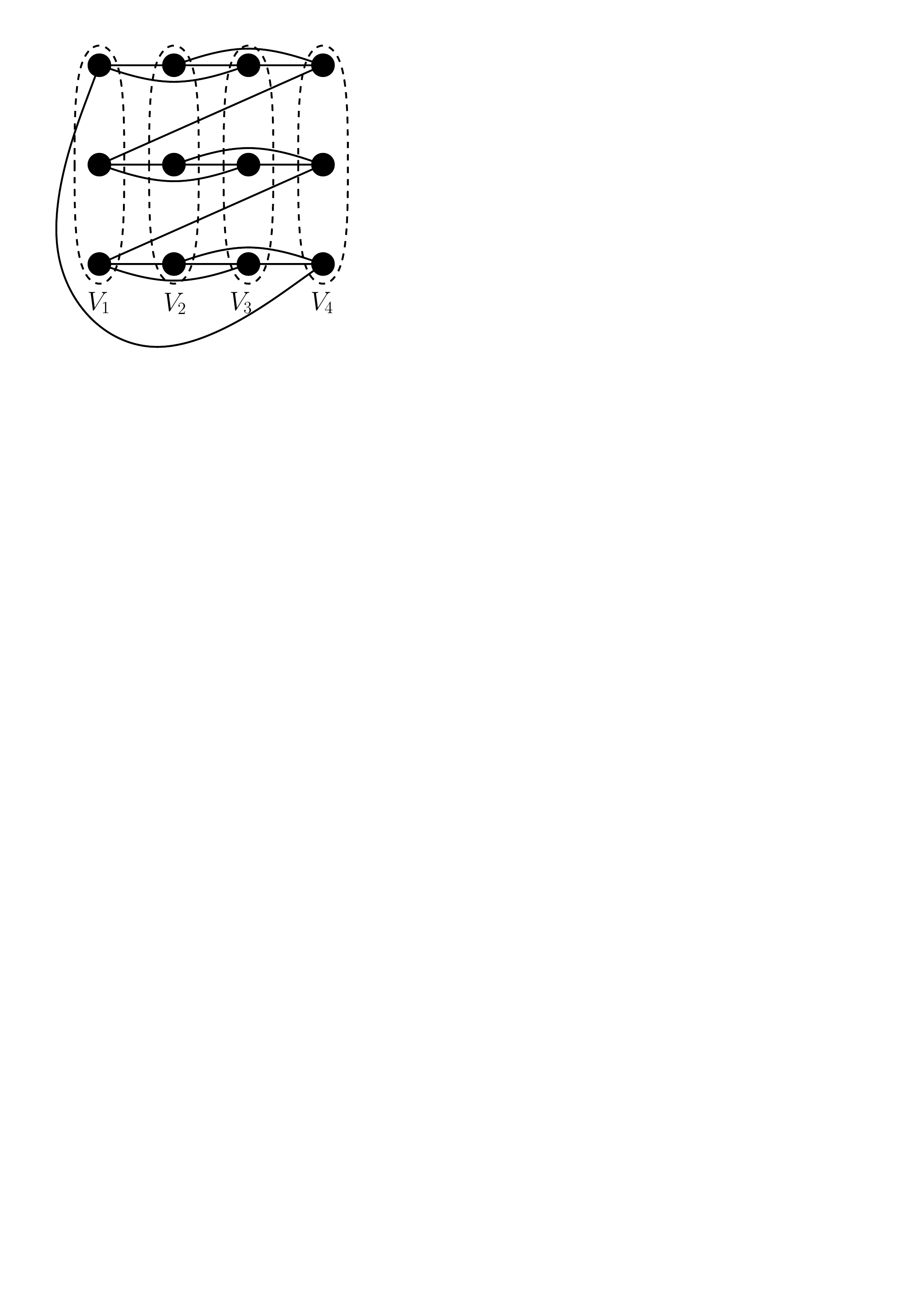}
 \caption{The graph $J(\ell)$ with $\ell=3$ and $\Delta=3$. 
 }
 \label{fig:expofrozen}
\end{figure}

We use this construction to prove Proposition~\ref{prop:tmix}.
\begin{proof}[Proof of Proposition~\ref{prop:tmix}]
Let $G=J(2k)$ be the graph defined above.
Let $\Omega_0(G)= \Omega(G)\sm \Omega^*(G)$ be the set of non-frozen colourings of $G$. The result of Feghali et al.~\cite{FeghaliJP14} implies that the Glauber dynamics with $\Delta+1$ colours on $\Omega_0(G)$ is irreducible. Since it is clearly aperiodic, the chain is ergodic and converges to the uniform distribution $\pi$ on $\Omega_0(G)$.

For every $i\in [k+1]$, define
$$
S_i=\{ \alpha \in \Omega_0(G):\, \alpha(v^1_{j})=1  \text{ for every } i+1\leq j \leq 2k+1-i\}\;.
$$
Observe that $ S_1\subseteq \dots  \subseteq S_{k+1}  = \Omega_0(G)$. Let $S_i^c=\Omega_0(G) \setminus S_i$. By symmetry, the probability that a uniformly random $\alpha$ satisfies $\alpha(v_{k+1}^1)=1$ is $1/(\Delta+1)$. Since $\Delta\geq 3$, it follows that,
\begin{align}\label{eq:small}
\pi(S^c_{k}) = 1 - \pi(S_{k})\geq 1-\frac{1}{\Delta+1}\geq \frac{3}{4}\;.
\end{align}
Consider the colouring $\beta\in \Omega_0(G)$ defined as follows: for $i\in [2k]$ and $j\in [\Delta+1]$,
$$
\beta(v^j_i)=\begin{cases}
\Delta+1 & \text{if }j=1\text{ and } i=1\\
1 & \text{if }j=\Delta+1 \text{ and }i=2k\\
j & \text{otherwise.}\\
\end{cases}
$$
Note that $\beta \in S_1$. Let $X_t$ be the Glauber dynamics with $\Delta+1$ colours on $\Omega_0(G)$ starting at $X_0=\beta$. Let $\tau_0=0$ and, for every $i\in [k]$, define the following stopping times for $X_t$,
$$
\tau_i=\min \{t:\, X_{t}\in S_{i}^c\mid X_0=\beta \}\;.
$$
From the definition of $\beta$, $\tau_i$ is also the smallest integer $t$ such that either $X_{t}(v^1_{i+1})\neq 1$ or $X_{t}(v^1_{2k+1-i})\neq 1$. 
In particular, we have
$$
0<\tau_1\leq \tau_2\leq \dots \leq\tau_{k}\;.
$$ 
%
%
%

Let $Y_i=\tau_i-\tau_{i-1}$ with $\tau_0=0$. Then $\tau_k = \sum_{i=1}^{k} Y_i$.
At time $t$, the Glauber dynamics chooses a vertex $v(t)\in [n]$ and a colour $c(t)\in [\Delta+1]$ uniformly at random, and recolours $v(t)$ with $c(t)$ if possible.
For $v\in [n]$, if $X_{t+1}(v)\neq X_t(v)$, then $v(t)=v$, and the latter occurs with probability $1/n$. Let $\{Z_i\}_{i\in [k]}$ be a collection of independent geometric random variables with parameter $2/n$. It follows that $Y_i$ stochastically dominates $Z_i$. Thus, if $Z=\sum_{i=1}^k Z_i$, there is a coupling of $\tau_k$ and $Z$ such that $Z\leq \tau_k$.
Using a concentration bound on the sum of independent geometric random variables~(see e.g. Theorem 3.1 in~\cite{jansontail}) we obtain that 
\begin{align*}
\Pr\left(Z \leq  \lambda\cdot \frac{kn}{2}\right)\leq e^{-k(\lambda-1-\log{\lambda})}\;.
\end{align*}
For $\lambda=1/2$ and since $k\geq 4$, we have $k(\lambda-1-\log{\lambda})\geq 3/4$. Thus,
\begin{align}\label{eq:1}
\Pr(\tau_k \leq k n/4) \leq \Pr(Z\leq k n/4) \leq e^{-3/4} < 1/2\;.
\end{align}
Let $\mu_t:=P^t(\beta,\cdot)$. 
Using~\eqref{eq:1}, it follows that for every $t\leq k n/4$,
\begin{align}\label{eq:2}
\mu_t(S^c_{k})=\Pr(X_t\in S^c_{k}|X_0=\beta)\leq \Pr(\tau_k \leq k n/4) + \Pr(X_t\in S^c_{k}| X_0=\beta,\,\tau_k \geq k n/4) < 1/2\;.
\end{align}
Inequalities~\eqref{eq:small} and~\eqref{eq:2} imply that for $t\leq k n/4$,
$$
d(t)=\max_{\alpha \in\Omega_0(G)} \|P^t(\alpha,\cdot) -\pi\|_{TV}\geq \|\mu_t- \pi\|_{TV}\geq  \pi(S^c_{k})- \mu_t(S^c_{k}) > \frac{1}{4}\;,
$$
and thus $\tmix=\tmix(1/4)\geq k n/4 = n^2/8(\Delta+1)$.

\end{proof}

An alternative proof to Proposition~\ref{prop:tmix} can be given using Lemma 4.2 in~\cite{hayes2005general} with $R=k$ and standard comparison arguments between continuous and discrete Markov chains. Here we provided a simpler proof that does not use the full generality of the setting in there.

\section{Graphs with many frozen colourings}

In this section we use the graph $J(\ell)$ constructed in Section~\ref{sec:tmix} to prove Proposition~\ref{prop:lb}.

\begin{proposition}\label{prop:lotoffrozen}
For every $\ell\geq 2$ and every $\Delta \geq 2$ there exists a connected graph $G$ with $\ell(\Delta+1)$ vertices such that
$$
 \Big( (\Delta-1)! \Big)^{\ell}\leq |\Omega^*(G)| \leq |\Omega(G)| \leq  \Big( 2 (\Delta+1)! \Big)^{\ell}\;.
$$
\end{proposition}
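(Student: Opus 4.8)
The plan is to take $G=J(\ell)$, the $\Delta$-regular graph constructed in Section~\ref{sec:tmix}, which has exactly $\ell(\Delta+1)$ vertices. Here each ``layer'' $\{v_i^1,\ldots,v_i^{\Delta+1}\}$ induces a copy of $K_{\Delta+1}$ with the single edge $v_i^1 v_i^{\Delta+1}$ deleted, and the $\ell$ near-cliques are glued into a cyclic chain by the edges of $E_1$ (namely $v_{i+1}^1 v_i^{\Delta+1}$). Since $\ell\geq 2$, these cross edges join genuinely distinct layers, so $G$ is connected: each layer is connected, and the $E_1$ edges link consecutive layers around the cycle. The middle inequality $|\Omega^*(G)|\leq |\Omega(G)|$ is immediate, so only the outer two bounds require work.

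For the upper bound I would delete the cross-layer edges $E_1$. The resulting graph $G\setminus E_1$ is the disjoint union of the $\ell$ layers, each isomorphic to $K_{\Delta+1}$ minus an edge. Deleting edges can only make a colouring ``more proper'', so $\Omega(G)\subseteq \Omega(G\setminus E_1)$ and it suffices to count colourings of a single layer. A short count (colour the $\Delta-1$ common neighbours of the non-adjacent pair as a clique, in $\tfrac{(\Delta+1)!}{2}$ ways, after which each endpoint of the missing edge has $2$ independent free colours) shows $K_{\Delta+1}$ minus an edge has exactly $\tfrac{(\Delta+1)!}{2}\cdot 4 = 2(\Delta+1)!$ proper $(\Delta+1)$-colourings; equivalently this follows from deletion–contraction, $P(K_{\Delta+1}-e,k)=P(K_{\Delta+1},k)+P(K_\Delta,k)$, evaluated at $k=\Delta+1$. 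Multiplying over the $\ell$ disjoint layers gives $|\Omega(G)|\leq (2(\Delta+1)!)^\ell$.

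For the lower bound I would exhibit an explicit family of $((\Delta-1)!)^\ell$ frozen colourings. Fix colour $1$ on every vertex $v_i^1$ and colour $\Delta+1$ on every vertex $v_i^{\Delta+1}$, and colour the internal vertices $v_i^2,\ldots,v_i^\Delta$ of each layer by an arbitrary permutation of $\{2,\ldots,\Delta\}$, chosen independently for each $i\in[\ell]$. Distinct permutation choices differ on internal vertices, so the family has size $((\Delta-1)!)^\ell$. It remains to verify each such colouring is frozen; since $G$ is $\Delta$-regular this is the same as the frugal condition, i.e. every colour appears once in every closed neighbourhood. For an internal vertex the closed neighbourhood is exactly its layer, which is rainbow by construction. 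For a boundary vertex the closed neighbourhood consists of $\Delta$ vertices of its own layer together with one vertex of an adjacent layer through $E_1$: for $v_i^{\Delta+1}$ the own-layer part is missing colour $1$, which is supplied by its neighbour $v_{i+1}^1$, and symmetrically for $v_i^1$ the missing colour $\Delta+1$ is supplied by $v_{i-1}^{\Delta+1}$.

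The main obstacle is precisely this boundary verification in the lower bound: the closed neighbourhoods of $v_i^1$ and $v_i^{\Delta+1}$ straddle two layers via $E_1$, so one must check that the globally constant choice of colours on the first and last vertices of each layer is consistent with the cyclic gluing (indices mod $\ell$) and indeed restores the one colour missing from each boundary closed neighbourhood. Everything else reduces to the routine per-layer colour count above.
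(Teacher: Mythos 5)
Your proposal is correct and follows essentially the same route as the paper: both take $G=J(\ell)$, exhibit the same family of $\bigl((\Delta-1)!\bigr)^{\ell}$ frozen colourings obtained by permuting the colours $\{2,\dots,\Delta\}$ on the internal vertices of each layer, and obtain the upper bound from the same per-layer count of $2(\Delta+1)!$. The only cosmetic difference is that you check the frozen condition at the boundary vertices directly, whereas the paper notes that each set $\{v_i^2,\dots,v_i^{\Delta}\}$ is a module and invokes Lemma~\ref{lem:module}.
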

\begin{proof}

Consider the graph $G=J(\ell)$ and the colouring $\alpha_0$ of $G$ given by $\alpha_0(v_i^j)=i$ for every $i\in [\Delta+1]$ and $j\in [\ell]$. We claim that $\alpha_0\in \Omega^*(G)$. On the one hand, $\alpha_0\in \Omega(G)$ since each $V_i$ is an independent set. On the other hand, each vertex $v_i^j$ is adjacent to exactly one vertex in each set $V_k$, for $k\neq i$, thus $\alpha_0$ is frozen. 
For every $i\in [\ell]$, let $X_i=\{ v_i^2,\ldots,v_i^{\Delta} \}$. By construction of $G$, the set $X_i$ induces a clique and all the vertices of $X_i$ have the same neighbourhood in $V \setminus X_i$, \emph{i.e.} the set $X_i$ is a module of $G$. By Lemma~\ref{lem:module}, if $\alpha$ is the colouring obtained from $\alpha_0$ by permuting some colours in $X_i$, $\alpha$ is also frozen. 
For every $i \in [\ell]$, choose a permutation $\sigma_i$ of the set $\{2,\dots,\Delta\}$. Each choice of permutations gives rise to a distinct colouring obtained from $\alpha_0$ by permuting the colours in $X_i$ according to $\sigma_i$ which is  proper and frozen~(Lemma~\ref{lem:module}). It follows that $|\Omega^*(G)|\geq \left((\Delta-1)!\right)^{\ell}$.

To bound from above the number of proper $(\Delta+1)$-colourings, observe that we can create any element of $\Omega(G)$ in the following way: first, select a colouring for the clique induced by $v^1_i,v^2_i,\dots,v^\Delta_i$ (there are $(\Delta+1)!$ choices for each $i\in [\ell]$) and then select a colour for the vertices $v^{\Delta+1}_i$ (there are at most $2$ choices for each $i\in [\ell]$). It follows that $|\Omega(G)|\leq (2(\Delta+1)!)^{\ell}$.

\end{proof}
\begin{proof}[Proof of Proposition~\ref{prop:lb}]
Let $\ell_0$ be the smallest integer $\ell$ such that $\ell(\Delta+1)\geq n_0$ and let $n=\ell_0(\Delta+1)$. Let $G$ be the connected graph given by Proposition~\ref{prop:lotoffrozen} with $\ell=\ell_0$, which has $n$ vertices.
If $\alpha$ is a uniformly random colouring of $G$, we have
 \begin{align*}
   \Pr(\alpha \text{ is frozen}) 
   & = \frac{|\Omega^*(G)|}{|\Omega(G)|} 
    \leq \left(\frac{1}{2(\Delta+1)\Delta}\right)^{\frac{n}{\Delta+1}} 
    = e^{- \frac{\log(2(\Delta+1)\Delta)}{\Delta+1}\cdot n}
    \geq e^{- \frac{3\log(\Delta)}{\Delta}\cdot n}\;,
 \end{align*}
 since $2(\Delta+1)\Delta\leq \Delta^3$, for any $\Delta\geq 3$.
%
%
\end{proof}

\section{Random lifts and random regular graphs}

In this section we will prove Proposition~\ref{prop:girth} and Proposition~\ref{prop:whp}. 

To construct a graph with large girth that admits a frozen colouring, we will use random lifts of $K_{\Delta+1}$. Recall the definition of lift given in Section~\ref{sec:prec}. A random $k$-lift of $H$, is a lift where each perfect matching is chosen independently and uniformly at random among all permutations of length $k$. 

Here we will use the fact that the joint distribution of short cycles in random lifts is distributed as the product of independent Poisson random variables. Let $G$ be a random lift of $K_{\Delta+1}$, and let $C_\ell(G)$ be the number of $\ell$-cycles of $G$. Fortin and Rudinsky (Theorem~2.2 in~\cite{fortin2013asymptotic} for $K_{\Delta+1}$) show that if $Z_\ell$ are independent Poisson distributed random variables with mean $\lambda_\ell = \frac{(d+1)d(d-1)^{\ell-3}(d-3)}{2\ell}$, then, for every $m\geq 3$
$$
(C_\ell(G))^{m}_{\ell=3} \stackrel{d}{\longrightarrow} (Z_\ell)^m_{\ell=3}\;. 
$$

\begin{proof}[Proof of Proposition~\ref{prop:girth}]
Let $G$ be a random $k=(n/(\Delta+1))$-lift of $K_{\Delta+1}$. Using the Poisson approximation result, we have
$$
\Pr(g(G)\geq g) = \Pr \left(\cap_{\ell=3}^{g-1} \{C_\ell(G)=0\}\right) \longrightarrow \exp\left(- \sum_{\ell=3}^{g-1}  \lambda_\ell\right)>0\;.
$$
By Claim~\ref{cla:2}, any $k$-lift $G$ of $K_{\Delta+1}$ admits at least one frozen colouring. For sufficiently large $n$, it follows that there exists a graph with $g(G)\geq g$ and $\Omega^*(G)\neq \emptyset$.
\end{proof}

%

We have the following easy upper bound on the number of colourings of a connected graph.
\begin{claim}\label{clm:UB_cols}
Let $G$ be a connected graph on $n$ vertices and maximum degree $\Delta$, then $|\Omega(G)|\leq (\Delta+1)\Delta^{n-1}$.
\end{claim}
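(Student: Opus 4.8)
The plan is to exploit the connectivity of $G$ to produce a vertex ordering in which every vertex after the first has at least one earlier neighbour, and then to count colourings greedily along this order.

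First I would fix an ordering $\sigma=(v_1,\dots,v_n)$ of $V(G)$ such that for every $i\geq 2$ the vertex $v_i$ has at least one neighbour among $\{v_1,\dots,v_{i-1}\}$; equivalently, $d_\sigma(v_i)\geq 1$ in the notation of Section~\ref{sec:prec}. Such an ordering exists precisely because $G$ is connected: take any spanning tree of $G$, root it, and list the vertices in breadth-first order so that each non-root vertex is preceded by its parent. Alternatively, one may grow the order vertex by vertex, at each step appending a vertex adjacent to the already-chosen set, which is always possible by connectivity.

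Next I would count the elements of $\Omega(G)$ by revealing the colours of $v_1,\dots,v_n$ one at a time in the order $\sigma$. Every proper $(\Delta+1)$-colouring is produced exactly once by this procedure, so $|\Omega(G)|$ equals the number of valid assignment sequences. The vertex $v_1$ admits at most $\Delta+1$ colours. For each $i\geq 2$, the vertex $v_i$ has at least one already-coloured neighbour, and a proper colouring must avoid that neighbour's colour, so at least one colour in $[\Delta+1]$ is forbidden and $v_i$ has at most $\Delta$ available colours. Multiplying the per-vertex bounds yields
\[
|\Omega(G)| \leq (\Delta+1)\cdot\Delta^{n-1}\;,
\]
as claimed.

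I do not expect a serious obstacle here; the only point requiring care is the direction of the estimate. The quantity $\Ext^d$ introduced earlier, namely $\prod_v(\Delta+1-d_\sigma(v))$, gives a \emph{lower} bound on the number of proper extensions via Lemma~\ref{lem:ext}, whereas here we need an \emph{upper} bound. The resolution is that for the upper bound it suffices to know that each non-first vertex loses at least one colour, irrespective of how many distinct colours its coloured neighbours actually use; the connected ordering guarantees exactly this for every vertex except $v_1$.
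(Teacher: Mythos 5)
Your proof is correct and follows essentially the same route as the paper: order the vertices by an exploration of the connected graph so that every vertex after the first has an already-coloured neighbour, then bound the choices greedily by $\Delta+1$ for the first vertex and $\Delta$ for each subsequent one. The extra remark contrasting the upper bound here with the lower bound given by $\Ext^d$ in Lemma~\ref{lem:ext} is accurate but not needed.
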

\begin{proof}
Consider an ordering of the vertices obtained by performing an exploration of the connected graph $G$ that starts at an arbitrary vertex and discovers new vertices by looking at the edges departing from the already discovered vertices. colour the vertices following this order; when assigning a colour to a vertex (which is not the starting one) at least one of its neighbours has been already coloured. Thus, at least one of the $\Delta+1$ colours is forbidden for these vertices.
\end{proof}

Bender and Canfield~\cite{bender1974asymptotic} proved that for any fixed $\Delta$, the number of $\Delta$-regular graphs as $n\to \infty$ is
$$
r(n,\Delta)=(1+o(1))\sqrt{2} e^{(1-\Delta^2)/4}\left(\frac{n^\Delta \Delta^\Delta}{e^\Delta (\Delta!)^2}\right)^{n/2}\sim C(\Delta)\left(\frac{e\cdot n}{\Delta}\right)^{\Delta n/2}(2\pi\Delta)^{-n/2} \;.
$$

%
\begin{proof}[Proof of Proposition~\ref{prop:whp}]
We count the number of pairs $(G,\alpha)$, where $G$ is $\Delta$-regular and $\alpha\in \Omega^*(G)$. By Claim~\ref{cla:2}, if $G$ has a frozen colouring, then it is a $k=(n/(\Delta+1))$-lift of $K_{\Delta+1}$. Moreover, every frozen colouring gives an embedding of $G$ as a lift of $K_{\Delta+1}$. Thus, the number of pairs $(G,\alpha)$ is the number of $k$-lifts of $K_{\Delta+1}$. There are $\binom{n}{k,\ldots,k}$ ways to split the vertex set into $\Delta+1$ parts of size $k$, and $(k!)^{\binom{\Delta+1}{2}}$ ways to select the perfect matchings between them. Therefore, the number of such pairs is
\begin{align*}
l(n,\Delta)&=\binom{n}{k,\ldots,k}\left(k!\right)^{\binom{\Delta+1}{2}}
= n^{O(1)}\left(\frac{n}{(\Delta+1)e}\right)^{\Delta n/2}(\Delta+1)^{n} \;.
\end{align*}
Since every graph that admits a frozen colouring contributes in at least one in the number of such pairs,
\begin{align*}
\Pr(\Omega^*(G_{n,\Delta})\neq \emptyset)&\leq \frac{l(n,\Delta)}{r(n,\Delta)}\\
&= n^{O(1)}e^{-\Delta n}\left(1+\frac{1}{\Delta}\right)^{-\Delta n/2} (2\pi (\Delta+1)^3)^{n/2}\\
&= n^{O(1)}\exp(-f(\Delta) n/2)\;,
\end{align*}
where $f(x)=2x+x\log{(1+1/x)}-3\log{(x+1)}-\log{(2\pi)}$. It can be checked that $f(x)>0$ for $x\geq 3$.
%

\end{proof}

\bibliography{recoloring}

\begin{thebibliography}{10}

\bibitem{bender1974asymptotic}
E.~A. Bender.
\newblock The asymptotic number of non-negative integer matrices with given row
  and column sums.
\newblock {\em Discrete Mathematics}, 10(2):217--223, 1974.

\bibitem{BonamyB14a}
M.~Bonamy and N.~Bousquet.
\newblock {Reconfiguring independent sets in cographs}.
\newblock {\em CoRR}, 1406.1433, 2014.

\bibitem{BonsmaMNR14}
P.~Bonsma, A.~Mouawad, N.~Nishimura, and V.~Raman.
\newblock {The Complexity of Bounded Length Graph Recoloring and CSP
  Reconfiguration}.
\newblock In {\em {Parameterized and Exact Computation (IPEC)}}, volume 8894 of
  {\em {Lecture Notes in Computer Science}}, pages 110--121. 2014.

\bibitem{chen2018linear}
S.~Chen and A.~Moitra.
\newblock Linear programming bounds for randomly sampling colorings.
\newblock {\em arXiv preprint arXiv:1804.03156}, 2018.

\bibitem{delcourt2018rapid}
M.~Delcourt, G.~Perarnau, and L.~Postle.
\newblock Rapid mixing of glauber dynamics for colorings below vigoda's $11/6$
  threshold.
\newblock {\em arXiv preprint arXiv:1804.04025}, 2018.

\bibitem{Diestel}
R.~Diestel.
\newblock {\em {Graph Theory}}, volume 173 of {\em {Graduate Texts in
  Mathematics}}.
\newblock Springer-Verlag, Heidelberg, third edition, 2005.

\bibitem{DyerGJ06}
M.~Dyer, L.~Goldberg, and M.~Jerrum.
\newblock Systematic scan for sampling colorings.
\newblock {\em Ann. Appl. Probab.}, 16(1):185--230, 02 2006.

\bibitem{FeghaliJP14}
C.~Feghali, M.~Johnson, and D.~Paulusma.
\newblock A reconfigurations analogue of brooks' theorem and its consequences.
\newblock {\em Journal of Graph Theory}, 83(4):340--358, 2016.

\bibitem{Feghali0P15a}
C.~Feghali, M.~Johnson, and D.~Paulusma.
\newblock Kempe equivalence of colourings of cubic graphs.
\newblock {\em European Journal of Combinatorics}, 59:1--10, 2017.

\bibitem{fortin2013asymptotic}
J.P. Fortin and S.~Rudinsky.
\newblock Asymptotic eigenvalue distribution of random lifts.
\newblock {\em The Waterloo Mathematics Review}, pages 20--28, 2013.

\bibitem{Gopalan09}
P.~Gopalan, P.~Kolaitis, E.~Maneva, and C.~Papadimitriou.
\newblock {The Connectivity of Boolean Satisfiability: Computational and
  Structural Dichotomies}.
\newblock {\em SIAM J. Comput.}, pages 2330--2355, 2009.

\bibitem{hayes2005general}
T.~P. Hayes and A.~Sinclair.
\newblock A general lower bound for mixing of single-site dynamics on graphs.
\newblock In {\em Foundations of Computer Science, 2005. FOCS 2005. 46th Annual
  IEEE Symposium on}, pages 511--520. IEEE, 2005.

\bibitem{ItoD11}
T.~Ito, E.~Demaine, N.~Harvey, C.~Papadimitriou, M.~Sideri, R.~Uehara, and
  Y.~Uno.
\newblock {On the complexity of reconfiguration problems}.
\newblock {\em Theor. Comput. Sci.}, 412(12-14):1054--1065, 2011.

\bibitem{ItoKO14}
T.~Ito., M.~Kaminski, and H.~Ono.
\newblock {Independent set reconfiguration in graphs without large bicliques }.
\newblock In {\em {ISAAC'14}}, 2014.

\bibitem{jansontail}
S.~Janson.
\newblock Tail bounds for sums of geometric and exponential variables.(2014).
\newblock {\em URL http://www2.math.uu.se/\~{}svante/papers/sjN14.pdf}.

\bibitem{jerrum1995very}
M.~Jerrum.
\newblock A very simple algorithm for estimating the number of $k$-colorings of
  a low-degree graph.
\newblock {\em Random Structures \& Algorithms}, 7(2):157--165, 1995.

\bibitem{martinelli1999lectures}
F.~Martinelli.
\newblock {\em Lectures on Glauber dynamics for discrete spin models}.
\newblock Springer, 1999.

\bibitem{salas1997absence}
J.~Salas and A.~D. Sokal.
\newblock Absence of phase transition for antiferromagnetic potts models via
  the dobrushin uniqueness theorem.
\newblock {\em Journal of Statistical Physics}, 86(3-4):551--579, 1997.

\bibitem{SuzukiMN14}
A.~Suzuki, A.~Mouawad, and N.~Nishimura.
\newblock {Reconfiguration of Dominating Sets}.
\newblock {\em CoRR}, 1401.5714, 2014.

\bibitem{Heuvel13}
J.~van~den Heuvel.
\newblock {\em {The Complexity of change}}, page 409.
\newblock {Part of London Mathematical Society Lecture Note Series}. {S. R.
  Blackburn, S. Gerke, and M. Wildon} edition, 2013.

\bibitem{vigoda2000improved}
E.~Vigoda.
\newblock Improved bounds for sampling colorings.
\newblock {\em Journal of Mathematical Physics}, 41(3):1555--1569, 2000.

\end{thebibliography}

\bibliographystyle{plain}

\end{document}